\documentclass[a4paper, 12pt]{article}
\usepackage{amssymb,latexsym} \author{Johannes Sj\"ostrand\footnote{Ce travail a b\'en\'efici\'e d'une aide de l'Agence Nationale de la Recherche
portant la r\'ef\'erence ANR-08-BLAN-0228-01}\\
\small IMB, Universit\'e de Bourgogne\\
\small 9, Av. A. Savary, BP 47870\\
\small FR-21078 Dijon C\'edex\\
\footnotesize  and UMR 5584, CNRS\\
\footnotesize johannes@u-bourgogne.fr
} \date{Dedicated to V.G.~Maz'ya}
\title{Resolvent estimates for
  non-self-adjoint operators via semi-groups}

\newtheorem{dref}{Definition}[section] \newtheorem{lemma}[dref]{Lemma}
\newtheorem{theo}[dref]{Theorem} \newtheorem{prop}[dref]{Proposition}

\newenvironment{proof}{\par\noindent{{\bf Proof.}}}{\hfill$\Box$
\medskip} 
\newenvironment{proofof}{\par\noindent{{\bf Proof} of }}{\hfill$\Box$
\medskip}

\newcommand{\ekv}[2]{\begin{equation}\label{#1}#2\end{equation}}
\newcommand{\eekv}[3]{\begin{eqnarray}\label{#1}#2 \\ #3
\nonumber\end{eqnarray}}

  \newcommand\iint{\int\hskip -2mm\int}

 \newcommand\trans[1]{{^t\hskip -2pt
#1}}
\begin{document}

\maketitle
\begin{abstract} We consider a non-self-adjoint $h$-pseudodifferential operator $P$ in the semi-classical limit ($h\to 0$). If $p$ is the leading symbol, then under suitable assumptions about the behaviour of $p$ at infinity, we know that the resolvent $(z-P)^{-1}$ is uniformly bounded for $z$ in any compact set not intersecting the closure of the range of $p$. Under a sub\-ellipticity condition, we show that the resolvent extends locally inside the range up to a distance ${\cal O}(1)((h\ln \frac{1}{h})^{k/(k+1)})$ from certain boundary points, where $k\in \{ 2,4,...\}$. This is a slight improvement of a result by Dencker, Zworski and the author, and it has recently been obtained by W. Bordeaux Montrieux in a model situation where $k=2$. The method of proof is different from the one of Dencker et al, and is based on estimates of an associated semi-group.

  \medskip \par \centerline{\bf R\'esum\'e} Nous consid\' erons un op\' erateur $h$-pseudodiff\' erentiel non-autoadjoint $P$ dans la limite semi-classique ($h\to 0$). Si $p$ d\' esigne le symbole principal, alors sous des hypoth\`eses convenables sur le comportment de $p$ \`a l'infini nous savons que la r\' esolvante $(z-P)^{-1}$ est uniform\' ement born\' ee pour $z$ dans un compact qui ne rencontre pas l'adh\' erence de l'image de $p$. Sous une hypoth\`ese des sous-ellipticit\' e, nous montrons que la r\' esolvante s'\' etend vers l'int\' erieur de cet image jusqu'\`a une distance ${\cal O}(1)((h\ln \frac{1}{h})^{k/(k+1)})$ de certains points du bord, o\`u $k\in \{ 2,4,...\}$. Ceci est une l\'eg\`ere am\'elioration d'un r\' esultat de Dencker, Zworski et l'auteur. Cette am\' elioration a \' et\' e obtenue r\' ecemment par W. Bordeaux Montrieux dans une situation mod\`ele o\`u $k=2$. La m\' ethode de preuve, qui est diff\' erente de celle de Dencker et al, est bas\' ee sur des estimations sur un semi-groupe microlocal associ\' e.  \end{abstract}

\tableofcontents

\section{Introduction}\label{in}
\setcounter{equation}{0}

\par In this paper, we are interested in bounds on the resolvent $(z-P)^{-1}$ of a non-self-adjoint $h$-pseudodifferential operator with leading symbol $p$ when $h\to 0$, for $z$ in a neighborhood of certain points on the boundary of the range of $p$. The interest in such questions arouse with that in pseudospectra of non-self-adjoint operators, see \cite{Tr, TrEm}. Under reasonable hypothesies we know that $(z-P)^{-1}$ is uniformly bounded for $h>0$ small enough and for $z$ in any fixed compact set in ${\bf C}$, disjoint from the closure of the range of $p$. On the other hand, by a quasi-mode construction of E.B.~Davies \cite{Da}, that was generalized by Zworski \cite{Zw} by reduction to an old quasi-mode construction of H\"ormander (see also \cite{DeSjZw} for a more direct approach), we also know that if ${\bf C}\ni z=p(\rho )$, where $\rho $ is a point in phase space where $i^{-1}\{p,\overline{p}\}>0$ and $\{\cdot ,\cdot \cdot \}$ denotes the Poisson bracket, then we have quasimodes for $P-z$ in the sense that there exist $u=u_h\in C_0^\infty $, normalized in $L^2$, such that the $L^2$ norm of $(P-z)u_h$ is ${\cal O}(h^\infty )$, implying, somewhat roughly, that the norm of the resolvent (whenever it is defined) cannot be bounded by a negative power of $h$.  

\par A natural question is then what happens when $z$ is close to the boundary of the range of $p$. L.~Boulton \cite{Bo} and Davies \cite{Da2} obtained some results about this in the case of the non-self-adjoint harmonic operator on the real line. As with the quasi-mode construction this question is closely related to classical results in the general theory of linear PDE, and with N.~Dencker and Zworski (\cite{DeSjZw}) we were able to find quite general results closely related to the classical topic of subellipticity for pseudodifferential operators of principal type, studied by Egorov, H\"ormander and others. See \cite{Ho}. This topic in turn is closely related to the oblique derivate problem and degenerate elliptic operators, where V.G.~Maz'ya has made important contributions. See \cite{Maz1, Maz2}.

In \cite{DeSjZw} we obtained resolvent estimates at certain boundary points,\\
(A) under a non-trapping condition,\\
and\\
(B) under a stronger ``subellipticity condition''.

\par In case (A) we could apply quite general and simple arguments related to the propagation of regularity and in case (B) we were able to adapt general Weyl-H\"ormander calculus and H\"ormander's treatment of subellipticity for operators of principal type (\cite{Ho}). In the first case we obtained that the resolvent extends and has temperate growth in $1/h$ in discs of radius ${\cal O}(h\ln 1/h)$ centered at the appropriate boundary points, while in case (B) we got the corresponding extension up to distance ${\cal O}(h^{k/(k+1)})$, where the integer $k\ge 2$ is determined by a condition of ``subellipticity type''. 

\par However, the situation near boundary points of the type (B) is more special than the general subellipticity situations considered by Egorov and H\"ormander, and the purpose of the present paper is to develop such an approach by studying an associated semi-group basically as a Fourier integral operator with complex phase in the spirit of Maslov \cite{Mas}, Kucherenko \cite{Ku}, Melin-Sj\"ostrand \cite{MeSj76}. (See also the more recent works by A.~Menikoff-Sj\"ostrand \cite{MenSj}, O.~Matte \cite{Ma}, extending the approach of \cite{MeSj76} to non-homogeneous cases.) Finally it turned out to be more convenient to use Bargmann-FBI transforms in the spirit of \cite{Sj82} and \cite{HeSj86}. The semigroup method led to a strengthened result in case (B): The resolvent can be extended to a disc of radius ${\cal O}((h\ln 1/h)^{k/(k+1)})$ around the appropriate boundary points. This improvement has been obtained recently by W.~Bordeaux Montrieux \cite{Bo} for the model operator $hD_x+g(x)$, when $g\in C^\infty (S^1)$ and the points of maximum or minimum are all nondegenerate. In that case $k=2$ and Bordeaux Montrieux also constructed quasi-modes for values of the spectral parameter that are close to the boundary points. 

\par We next state the results and outline the proof in case (B).

\par Let $X$ be equal to ${\bf R}^n$ or equal to a compact smooth
manifold of dimension $n$. 

\par In the first case, let $m\in C^\infty ({\bf R}^{2n};[1,+\infty
[)$ be an order function (see \cite{DiSj99} for more details about the
pseudodifferential calculus) in the sense that for some $C_0,N_0>0$,
\ekv{in.1}
{
m(\rho )\le C_0\langle \rho -\mu \rangle^{N_0}m(\mu ),\ \rho ,\mu \in
{\bf R}^{2n},
}
where $\langle \rho -\mu \rangle= (1+|\rho -\mu |^2)^{1/2}$. Let
$P=P(x,\xi ;h)\in S(m)$, meaning that $P$ is smooth in $x,\xi $ and
satisfies 
\ekv{in.2}
{
|\partial _{x,\xi }^\alpha P(x,\xi ;h)|\le C_\alpha m(x,\xi ),\ (x,\xi
)\in {\bf R}^{2n},\, \alpha
\in {\bf N}^{2n},
}
where $C_\alpha $ is independent of $h$. We also assume that 
\ekv{in.3}
{
P(x,\xi ;h)\sim p_0(x,\xi )+hp_1(x,\xi )+...,\hbox{ in }S(m),
}
and write $p=p_0$ for the principal symbol. We impose the ellipticity
assumption
\ekv{in.4}
{
\exists w\in {\bf C},\, C>0,\hbox{ such that }|p(\rho )-w|\ge m(\rho )/C,\
\forall \rho \in {\bf R}^{2n}.
}
In this case we let 
\ekv{in.5}
{
P=P^w(x,hD_x;h)=\mathrm{Op}(P(x,h\xi ;h))
}
be the Weyl quantization of the symbol $P(x,h\xi ;h)$ that we can view as a
closed unbounded operator on $L^2({\bf R}^n)$.

\par In the second case when $X$ is compact manifold, we let $P\in
S^m_{1,0}(T^*X)$ (the classical H\"ormander symbol space ) of order
$m> 0$, meaning that
\ekv{in.6}
{
|\partial _x^\alpha \partial _{\xi }^\beta P(x,\xi ;h)|\le C_{\alpha
  ,\beta }\langle \xi \rangle^{m-|\beta |},\ (x,\xi )\in T^*X, } where $C_{\alpha ,\beta }$ are independent of $h$. We also assume that we have an expansion as in  (\ref{in.3}), now in the sense that \ekv{in.7} { P(x,\xi ;h)-\sum_0^{N-1}h^jp_j(x,\xi )\in h^NS^{m-N}_{1,0}(T^*X),\ N=1,2,...  } and we quantize the symbol $P(x,h\xi ;h)$ in the standard (non-unique) way, by doing it for various local coordinates and paste the quantizations together by means of a partition of unity. In the case $m>0$ we impose the ellipticity condition 
\ekv{in.8} { \exists C>0,\hbox{ such that }|p(x,\xi )|\ge \frac{\langle
  \xi \rangle^m}{C},\ |\xi |\ge C.
}

\par Let $\Sigma (p)=\overline{p^*(T^*X)}$ and let $\Sigma _\infty
(p)$ be the set of accumulation points of $p(\rho _j)$ for all
sequences $\rho _j\in T^*X$, $j=1,2,3,..$ that tend to infinity. The following
theorem is a partial improvement of corresponding results in
\cite{DeSjZw}.
\begin{theo}\label{in1}
We adopt the general assumptions above. Let $z_0\in \partial \Sigma
(p)\setminus \Sigma _\infty (p)$ and assume that $dp\ne 0$ at every
point of $p^{-1}(z_0)$. Then for every such point $\rho $ there exists
$\theta \in {\bf R}$ (unique up to a multiple of $\pi $) such that
$d(e^{-i\theta }(p-z_0))$ is real at $\rho $. We write $\theta
=\theta (\rho )$. Consider the following two cases:
\begin{itemize}
\item (A) For every $\rho \in p^{-1}(z_0)$, the maximal integral curve
  of $H_{\Re (e^{-i\theta (\rho )}p)}$ through the point $\rho $ is not
  contained in $p^{-1}(z_0)$. 
\item (B) There exists an integer $k\ge 1$ such that for every $\rho
  \in p^{-1}(z_0)$, there exists $j\in \{ 1,2,..,k\}$ such that 
$$p^*(\exp tH_p(\rho ))=at^j+{\cal O}(t^{j+1}),\ t\to 0,$$
where $a=a(\rho )\ne 0$. Here $p$ also denotes an almost holomorphic extesnion to a complex neighborhood of $\rho $ and we put $p^*(\mu )=\overline{p(\overline{\mu })}$. Equivalently, $H_p^j(\overline{p})(\rho )/(j!)=a\ne 0$.
\end{itemize}

\par Then, in case (A), there exists a constant $C_0>0$ such that for
every constant $C_1>0$ there is a constant $C_2>0$ such that the
resolvent $(z-P)^{-1}$ is well-defined for $|z-z_0|<C_1h\ln
\frac{1}{h}$, $h<\frac{1}{C_2}$, and satisfies the estimate 
\ekv{in.9}
{
\| (z-P)^{-1}\|\le \frac{C_0}{h}\exp ( \frac{C_0}{h}|z-z_0|).
}

\par In case (B), there exists a constant $C_0>0$ such that for
every constant $C_1>0$ there is a constant $C_2>0$ such that the
resolvent $(z-P)^{-1}$ is well-defined for $|z-z_0|<C_1(h\ln
\frac{1}{h})^{k/(k+1)}$, $h<\frac{1}{C_2}$ and satisfies the estimate 
\ekv{in.10}
{
\| (z-P)^{-1}\|\le \frac{C_0}{h^{\frac{k}{k+1}}}
\exp (\frac{C_0}{h}|z-z_0|^{\frac{k+1}{k}}).
}
\end{theo}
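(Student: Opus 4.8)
The plan is to reduce the resolvent estimate to an estimate on an associated semigroup $e^{-tA/h}$ obtained by microlocal conjugation. First I would reduce to a local problem near a point $\rho\in p^{-1}(z_0)$: away from $p^{-1}(z_0)$ the operator $z-P$ is elliptic in the relevant symbol class (using $z_0\notin\Sigma_\infty(p)$ and a partition of unity), so the only issue is microlocal invertibility near the finitely many points of $p^{-1}(z_0)$. At each such point, after multiplying by $e^{-i\theta(\rho)}$ and translating by $z_0$, I may assume $dp$ is real at $\rho$, so that $\Re p$ has a nondegenerate-type behaviour and $\Im p$ vanishes to the appropriate order along the $H_{\Re p}$-flow. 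I would then apply a semiclassical FBI--Bargmann transform (in the spirit of Sj\"ostrand and Helffer--Sj\"ostrand) to put the operator on a weighted space $H_\Phi$ of holomorphic functions, where $P$ becomes a pseudodifferential operator with symbol essentially $p$ restricted to the I-Lagrangian $\Lambda_\Phi=\{(x,\frac{2}{i}\partial_x\Phi)\}$; the point of this is that the quadratic (or model) part of $p$ along the flow direction can be normalized.

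The heart of the argument is to construct the semigroup $U(t)=e^{-tA/h}$, where $A$ is the microlocal model of $z-P$ near $\rho$, as a Fourier integral operator with complex phase \`a la Maslov--Kucherenko--Melin--Sj\"ostrand. Concretely, one solves an eikonal equation $\partial_t\varphi + a(x,\partial_x\varphi)=0$ with complex-valued $\varphi$, whose imaginary part grows like $t\,(\text{distance to }\rho^{\pm})^k$ or, along the critical direction, like $t^{k+1}$ — this is exactly where the exponent $k/(k+1)$ enters, by the usual scaling balancing $t^{k+1}$ against $t\cdot h$. Then one solves transport equations for an amplitude so that $U(t)$ solves $h\partial_t U + AU = 0$ modulo ${\cal O}(h^\infty)$ errors, and one checks $U(0)=I$ microlocally. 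From the positivity of $\Im\varphi$ one gets the key bound $\|U(t)\| \le C\exp(-t\,\delta(z)/(Ch))$ on the relevant spectral window, where $\delta(z)\sim |z-z_0|$; more precisely one tracks how the subelliptic gain degrades as $z$ moves into the range, giving a decay rate of order $|z-z_0|^{(k+1)/k}/h$ after optimizing over the ``effective time'' $t\sim (h/|z-z_0|)^{1/k}\ln\frac1h$ or so. Integrating $\int_0^T U(t)\,\frac{dt}{h}$ and controlling the contribution from $t=T$ by the decay then yields a microlocal right (and similarly left) inverse of $z-P$ with norm ${\cal O}(h^{-k/(k+1)})\exp({\cal O}(1)|z-z_0|^{(k+1)/k}/h)$. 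Glueing the microlocal inverses near the points of $p^{-1}(z_0)$ with the elliptic parametrix elsewhere, and absorbing the small errors, gives the global resolvent and the bound \nr{in.10}; case (A) is the degenerate limit $k=\infty$ and is handled by the same scheme with $t^{k+1}$ replaced by the non-trapping gain, yielding the radius $C_1 h\ln\frac1h$ and the bound \nr{in.9}.

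The main obstacle I expect is the construction and control of the complex phase $\varphi(t,x,\eta)$ uniformly for $t$ in a range that grows like $\ln\frac1h$: one must show the associated complex canonical transformation stays within a fixed Grauert tube (so that the FBI calculus and the almost-holomorphic machinery still apply), that $\Im\varphi$ retains the sharp lower bound $\gtrsim t(\cdots)^k$ without deterioration over this long time, and that the amplitude and all error terms remain ${\cal O}(h^\infty)$ after the $\ln\frac1h$ factors are accounted for. A secondary technical point is the precise normal form for $p$ near $\rho$: one needs that the condition in case (B), namely $H_p^j(\overline p)(\rho)\ne 0$ for some $j\le k$ with the appropriate sign of the imaginary part, translates into a model Hamiltonian for which the eikonal equation can actually be solved with the stated positivity — this is where the earlier reduction via $\theta(\rho)$ and the structure of $\partial\Sigma(p)$ is used, and it is what forces $k$ to be even in the interesting cases. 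Once the phase is under control, the passage from the semigroup bound to the resolvent bound and the glueing are comparatively routine.
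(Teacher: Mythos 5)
Your plan follows the paper's route essentially step for step: localization by ellipticity away from $p^{-1}(z_0)$, conjugation by an FBI--Bargmann transform to spaces $H_\Phi$, construction of $e^{-tP/h}$ microlocally as a Fourier integral operator with complex phase whose imaginary part gains $t^{k+1}/C$, and then $\widetilde R(z)=\frac1h\int_0^T e^{tz/h}\widetilde U(t)\,dt$ as an approximate inverse, with the Laplace-type integral $\int_0^\infty e^{(t\Re z-t^{k+1}/C)/h}dt$ producing exactly the bound \nr{in.10}; the gluing and Fredholm argument at the end are as you describe. Two corrections/completions on the points you flag as obstacles. First, the long-time control you identify as the main difficulty (phases for $t\sim\ln\frac1h$) does not arise: the paper truncates at $T=h^\delta$ with $\delta(k+1)<1$, so that $e^{(t\Re z-t^{k+1}/C)/h}={\cal O}(h^\infty)$ already at the endpoint; the whole construction lives on a time interval shrinking to $0$, and the $\ln\frac1h$ in the admissible radius enters only through keeping $h^{-1}(\Re z)_+^{(k+1)/k}={\cal O}(\ln\frac1h)$ so that $e^{tz/h}\widetilde U(t)$ stays temperate. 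Second, the two steps you defer are precisely where the substance is: (i) the uniform bound $\Im\varphi\ge t^{k+1}/C$ is not read off from a pointwise Taylor expansion at $\rho_0$ (which gives no uniformity in $\rho$); the paper passes to the real side, writes $\Lambda_{\Phi_t}=\kappa_T(\Lambda_{G_t})$ with $G_t$ solving \nr{ev.6}, reduces to $J(t,\rho)=\int_0^t\Re p(\exp sH_{\Im p}(\rho))\,ds\ge t^{k+1}/C$, and proves that uniformly by a compactness argument on rescaled Taylor polynomials (Proposition \ref{ev1}); (ii) the reduction of Theorem \ref{in1} to the model with nonnegative real part uses Malgrange preparation, $p-z=q\,(\xi_n+r)$, a winding-number argument showing $\Im r$ keeps a fixed sign because $z_0\in\partial\Sigma(p)$, and a short computation identifying the hypothesis $H_p^j\overline p(\rho)\ne0$ with $H_{\Re\widehat p}^j\Im\widehat p(\rho_0)>0$ for the factored symbol. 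Neither step is deep, but both must be written out for the argument to close, and your sketch leaves them as assertions.
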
 

\par In \cite{DeSjZw} we obtained (\ref{in.9}), (\ref{in.10}) for
$z=z_0$, implying that the resolvent exists and satisfies the same
bound for $|z-z_0|\le h^{k/(k+1)}/{\cal O}(1)$ in case (B) and with
$k/(k+1)$ replaced by 1 in case (A). In case (A) we also showed that
the resolvent  exists with norm bounded by a negative power of $h$
in any disc $D(z_0,C_1h\ln (1/h))$. (The condition in case (B) was formulated a little differently in \cite{DeSjZw}, but as we shall see later on the two conditions leed to the same microlocal models and hence they are equivalent.) Actually the proof in
\cite{DeSjZw} also gives (\ref{in.9}), so even if the methods of the
present paper also most likely lead to that bound, we shall not not elaborate the
details in that case.

Let us now consider the special situation of potential interest for
evolution equations, namely the case when
\ekv{in.11}{z_0\in i{\bf R},}
\ekv{in.12}{
\Re p(\rho )\ge 0\hbox{ in }\mathrm{neigh\,}(p^{-1}(z_0),T^*X).
}
\begin{theo}\label{in2}
We adopt the general assumptions above. Let $z_0\in \partial \Sigma
(p)\setminus \Sigma _\infty (p)$ and assume (\ref{in.11}),
(\ref{in.12}). Also assume that
$dp\ne 0$ on $p^{-1}(z_0)$, so that $d\Im p\ne 0$, $d\Re p=0$ on that set.
 Consider the two cases of Theorem \ref{in1}:
\begin{itemize}
\item (A) For every $\rho \in p^{-1}(z_0)$, the maximal integral curve
  of $H_{\Im p}$ through the point $\rho $ contains a point where $\Re
  p>0$. 
\item (B) There exists an integer $k\ge 1$ such that for every $\rho
  \in p^{-1}(z_0)$, we have $H_{\Im p}^j\Re p(\rho )\ne 0$ for some
  $j\in \{ 1,2,...,k\}$.
\end{itemize}

\par Then, in case (A), there exists a constant $C_0>0$ such that for
every constant $C_1>0$ there is a constant $C_2>0$ such that the
resolvent $(z-P)^{-1}$ is well-defined for 
$$
|\Im (z-z_0)|<\frac{1}{C_0},\ \frac{-1}{C_0}<\Re
z<C_1h\ln\frac{1}{h},\ h<\frac{1}{C_2},
$$
 and satisfies the estimate 
\ekv{in.13}
{
\| (z-P)^{-1}\|\le \cases{ \frac{C_0}{|\Re z|},\ \Re z\le -h,\cr
  \frac{C_0}{h}\exp (\frac{C_0}{h}\Re z), \Re z\ge -h.
 }
}

\par In case (B), there exists a constant $C_0>0$ such that for
every constant $C_1>0$ there is a constant $C_2>0$ such that the
resolvent $(z-P)^{-1}$ is well-defined for 
\ekv{in.13.5}
{|\Im (z-z_0)|<\frac{1}{C_0},\ \frac{-1}{C_0}<\Re
z<C_1(h\ln\frac{1}{h})^{\frac{k}{k+1}},\ h<\frac{1}{C_2},}
 and satisfies the estimate 
\ekv{in.14}
{
\| (z-P)^{-1}\|\le \cases{ \frac{C_0}{|\Re z|},\ \Re z\le -h^{\frac{k}{k+1}},\cr
  \frac{C_0}{h^{\frac{k}{k+1}}}\exp (\frac{C_0}{h}(\Re z)_+^{^{\frac{k}{k+1}}}), \Re z\ge -h^{\frac{k}{k+1}}.
 }
}
\end{theo}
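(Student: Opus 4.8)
The plan is to deduce Theorem~\ref{in2} from a sharp decay estimate for the associated semigroup, and I would concentrate on the substantial case (B) (case (A) is both simpler and in essence already contained in \cite{DeSjZw}). First I would normalize: replacing $P$ by $P-z_0$ and $z$ by $z-z_0$ we may assume $z_0=0$, so that $\Re p\ge 0$ near $p^{-1}(0)$ while $p=0$, $d\Re p=0$ (a minimum of $\Re p$) and $d\Im p\ne 0$ on $p^{-1}(0)$. This is exactly the case $\theta(\rho)=\pi/2$ of Theorem~\ref{in1}, for which $\Re(e^{-i\theta(\rho)}p)=\Im p$, so the dichotomy (A)/(B) here is literally the one of Theorem~\ref{in1}; moreover, since $\Re p$ attains a minimum along the $H_{\Im p}$-flow, the first non-vanishing $H_{\Im p}^{j}\Re p(\rho)$ occurs at an even $j$, so that $k$ may be taken even. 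By a standard reduction --- $z-P$ is elliptic microlocally away from $p^{-1}(0)$ for $|z|$ small (here $0\notin\Sigma_\infty(p)$, so $p^{-1}(0)$ is compact) --- it suffices to bound the resolvent of a modified operator $\widetilde P$ that agrees with $P$ microlocally near $p^{-1}(0)$ and, using (\ref{in.4}) or (\ref{in.8}) together with (\ref{in.12}), satisfies $\Re\widetilde P\ge -C_3h$ on $L^2(X)$. Then $-\widetilde P/h$ generates a semigroup $S(t)=e^{-t\widetilde P/h}$ with $\|S(t)\|\le e^{C_3t}$, and
\[
(z-\widetilde P)^{-1}=-\frac{1}{h}\int_0^{+\infty}e^{tz/h}S(t)\,dt,\qquad \Re z<C_3h,
\]
so everything reduces to a decay estimate for $S(t)$.

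The key estimate I would establish is: for every $A>0$ there exist $C,h_0>0$ such that
\[
\|e^{-t\widetilde P/h}\|\le C\,e^{-t^{k+1}/(Ch)},\qquad 0\le t\le A\,(h\ln(1/h))^{1/(k+1)},\quad 0<h\le h_0.
\]
Granting it, the semigroup property together with the convexity of $t\mapsto t^{k+1}$ propagates a (weaker, but still exponential-in-$1/h$) decay to all $t\ge 0$, so that $\|(z-\widetilde P)^{-1}\|\le\frac{1}{h}\int_0^{\infty}e^{t\Re z/h}\|S(t)\|\,dt$ is finite --- whence $(z-\widetilde P)^{-1}$, and by the reduction above also $(z-P)^{-1}$, exists --- as soon as $\Re z<C_1(h\ln(1/h))^{k/(k+1)}$, where $C_1$ can be made arbitrarily large by choosing $A$ large. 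A direct, Laplace-type estimate of this integral then gives (\ref{in.14}): for $\Re z>0$ the exponent $t\Re z/h-t^{k+1}/(Ch)$ is maximized near $t_*\sim(\Re z)^{1/k}$, with maximal value ${\cal O}(1)(\Re z)^{(k+1)/k}/h$ --- the Legendre transform of $t^{k+1}$ --- whence the factor $\exp({\cal O}(1)(\Re z)_+^{(k+1)/k}/h)$; the prefactor ${\cal O}(h^{-k/(k+1)})$ comes from the scale $t\sim h^{1/(k+1)}$ of the integral near the origin when $\Re z\le 0$; and for $\Re z\le -h^{k/(k+1)}$ one simply uses $\|S(t)\|\le e^{C_3t}$ to get $\|(z-P)^{-1}\|\le(|\Re z|-C_3h)^{-1}\le C_0/|\Re z|$. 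Only $\Re z$ enters because $|e^{tz/h}|=e^{t\Re z/h}$; the restriction $|\Im(z-z_0)|<1/C_0$ is used only to keep $z$ in the region governed by $p^{-1}(z_0)$, so that the elliptic reduction and the accretive modification remain valid. Case (A) is the same scheme with the cruder bound $\|e^{-t\widetilde P/h}\|\le Ce^{-t/(Ch)}$ for $0\le t\le Ah\ln(1/h)$, leading to (\ref{in.13}).

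The real work --- and the main obstacle --- is the semigroup estimate. Here I would construct $e^{-t\widetilde P/h}$, microlocally near $p^{-1}(0)$, as a Fourier integral operator with complex phase in the spirit of \cite{MeSj76}, most conveniently after conjugating by an FBI--Bargmann transform so that $\widetilde P$ acts on a weighted space $H_{\Phi_0}$ of holomorphic functions (as in \cite{Sj82,HeSj86}). One solves the complex eikonal equation $\partial_t\phi+p(x,\partial_x\phi)=0$ with $\phi|_{t=0}$ generating the identity, then the transport equations for the amplitude, the decisive point being a lower bound $\Im\phi\gtrsim t^{k+1}$ (equivalently: the evolved weight stays $\ge\Phi_0+t^{k+1}/C$). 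This is where the subellipticity $H_{\Im p}^{j}\Re p(\rho)\ne 0$, $j\le k$, enters: after the reduction $\Im p$ is a real fibre coordinate (using $d\Im p\ne 0$ on $p^{-1}(0)$), so $\Re p\ge 0$ vanishes to order $\le k$ (in the worst case over $p^{-1}(0)$) along its flow, and a Taylor expansion of the action along the complexified bicharacteristics --- exactly as in H\"ormander's treatment of subelliptic estimates in \cite{Ho} --- yields $\Im\phi\gtrsim\int_0^{t}\sigma^{k}\,d\sigma\sim t^{k+1}$. The construction is carried out modulo ${\cal O}(h^\infty)$, and it is meaningful precisely as long as the leading factor $e^{-\Im\phi/h}\sim e^{-t^{k+1}/(Ch)}$ dominates those errors, i.e. as long as $e^{-t^{k+1}/(Ch)}\gg h^{N}$, that is $t\lesssim(Nh\ln(1/h))^{1/(k+1)}$; since the order $N$ to which one pushes the construction is arbitrary, this explains both the interval $[0,A(h\ln(1/h))^{1/(k+1)}]$ for every $A$ and the appearance of the logarithm. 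The delicate points are then to keep uniform control of the complex canonical transformation and of the positivity/convexity of $\Im\phi$ as $t$ ranges over this growing interval --- the almost-holomorphic extension of $p$ and the relevant curvature estimates degrade as $t$ increases --- to prove $L^2$-boundedness on $H_{\Phi_0}$ of the resulting operator with the stated norm by stationary phase, and to check that the contributions of the lower-order symbols $hp_1,h^2p_2,\dots$ and of the patching with the region away from $p^{-1}(0)$ remain ${\cal O}(h^\infty)$ relative to the main term.
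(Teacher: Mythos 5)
Your overall strategy is the one the paper actually follows: represent the resolvent as a Laplace transform of $e^{-tP/h}$, build that evolution as a complex-phase Fourier integral operator after an FBI--Bargmann conjugation, extract a decay $e^{-t^{k+1}/(Ch)}$ from the subellipticity hypothesis, and evaluate the Laplace integral to get the $h^{-k/(k+1)}$ prefactor and the $\exp(C(\Re z)_+^{(k+1)/k}/h)$ factor. Your identification of where the logarithm enters (competition between the main term and the ${\cal O}(h^N)$ errors of the construction) and your treatment of the region $\Re z\le -h^{k/(k+1)}$ are also consistent with the paper, which handles that region by the sharp G\aa{}rding inequality.

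There is, however, a genuine gap in the way you globalize. Your key estimate is a bound $\|e^{-t\widetilde P/h}\|_{L^2\to L^2}\le Ce^{-t^{k+1}/(Ch)}$ for a single operator $\widetilde P$ on all of $L^2(X)$, and you then use the exact semigroup property to propagate decay to all $t\ge 0$. But hypothesis (B) controls $\Re p$ along the $H_{\Im p}$-flow only at points of $p^{-1}(z_0)$; the set $\{\Re p=0\}$ is in general much larger (e.g.\ for $-h^2\Delta+iV$ it is $\{\xi=0\}$), and at a point $\rho_1$ with $\Re p(\rho_1)=0$ but $\Im p(\rho_1)\ne\Im z_0$ there need be no decay at all, so no operator that ``agrees with $P$ microlocally near $p^{-1}(z_0)$'' and is merely accretive modulo ${\cal O}(h)$ can satisfy your global bound. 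One could try to force $\Re\widetilde p>0$ outside a neighborhood of $p^{-1}(z_0)$ by adding a nonnegative symbol, but then you must prove the $t^{k+1}$ decay \emph{globally} (transition regions, and infinity in phase space when $X={\bf R}^n$), which is a substantially harder problem than the one you started with. The paper avoids this entirely: it never constructs a global semigroup. It builds $\widetilde U(t)$ only microlocally near each $\kappa_T(\rho_0)$, $\rho_0\in p^{-1}(z_0)$, cuts the time integral off at $t=h^\delta$ with $\delta(k+1)<1$ (where the integrand is already ${\cal O}(h^\infty)$, so no extension to large $t$ is needed), obtains from $\widetilde R(z)=\frac1h\int_0^{h^\delta}e^{tz/h}\widetilde U(t)\,dt$ a microlocal a priori estimate with cutoffs, patches with elliptic estimates away from $p^{-1}(z_0)$, and concludes invertibility by Fredholm theory (index $0$ plus injectivity) rather than by producing an actual inverse as a convergent integral. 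Finally, the decisive analytic point --- the lower bound $\Phi_0-\Phi_t\gtrsim t^{k+1}$ --- is only asserted in your outline; the paper proves it by transporting the weight to the real side as $\Lambda_{\Phi_t}=\kappa_T(\Lambda_{G_t})$, deriving the Hamilton--Jacobi equation $\partial_tG_t+\Re p(\rho+iH_{G_t})=0$, Taylor expanding it into the differential inequality $(\partial_t+H_{\Im p})G_t+{\cal O}(G_t)=-\Re p$, and bounding $\int_0^t\Re p(\exp sH_{\Im p}(\rho))\,ds\ge t^{k+1}/C$ by an elementary compactness argument (Proposition \ref{ev1}). Without that chain, or an equivalent, the proof is not complete.
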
 

The case (A) in the theorems is practically identical with the corresponding results in \cite{DeSjZw} and can be obtained by inspection of the proof there, and from now on we concentrate on the case (B). 
Away from the set $p^{-1}(z_0)$ we can use ellipticity, so the problem is to obtain microlocal estimates near a point $\rho \in p^{-1}(z_0)$. After a standard factorization of $P-z$ in such a region, we can further reduce the proof of the first theorem to that of the second one.

\par The main (quite standard) idea of the proof of Theorem \ref{in2} is to study $\exp (-tP/h)$ (microlocally) for $0\le t\ll 1$ and to show that in this case
\ekv{in.15}
{
\| \exp -\frac{tP}{h}\| \le C\exp (-\frac{t^{k+1}}{Ch}),
}
for some constant $C>0$. Noting that that implies that $\| \exp -\frac{tP}{h}\| ={\cal O}(h^\infty )$ for $t\ge h^\delta $ when $\delta (k+1)<1$, and using the formula
\ekv{in.16}
{
(z-P)^{-1}=-\frac{1}{h}\int_0^\infty \exp (\frac{t(z-P)}{h}) dt,
}
leads to (\ref{in.14}). (This has some relation to the works of A.~Cialdea and Maz'ya \cite{CiMa05, CiMa06} where the $L^p$ dissipativity of second order operators is characterized.) 

The most direct way of studying $\exp (-tP/h)$, or rather a microlocal version of that operator, is to view it as a Fourier integral operator with complex phase (\cite{Mas, Ku, MeSj76, Ma}) of the form
\ekv{in.17}
{
U(t)u(x)=\frac{1}{(2\pi h)^n}\iint e^{\frac{i}{h}(\phi (t,x,\eta )-y\cdot \eta )}a(t,x,\eta ;h) u(y)dy d\eta ,
}
where the phase $\phi $ should have a non-negative imaginary part and satisfy the Hamilton-Jacobi equation:
\ekv{in.18}
{
i\partial _t\phi +p(x,\partial _x\phi )={\cal O}((\Im \phi )^\infty ), \hbox{ locally uniformly,}
}
with the initial condition 
\ekv{in.19}
{
\phi (0,x,\eta )=x\cdot \eta .
}
The amplitude $a$ will be bounded with all its derivatives and has an asymptotic expansion where the terms are determined by transport equations. This can indeed be carried out in a classical manner for instance by adapting the method of \cite{MeSj76} to the case of non-homogene\-ous symbols following a reduction used in \cite{MenSj, Ma}. It is based on making estimates on the fonction 
$$
S_\gamma (t)=\Im (\int_0^t \xi (s)\cdot dx(s))-\Re \xi (t)\cdot \Im x(t)+\Re \xi (0)\cdot \Im x(0)
$$ 
along the complex integral curves $\gamma :[0,T]\ni s\mapsto (x(s),\xi (s))$ of the Hamilton field of $p$. Notice that here and already in (\ref{in.18}), we need to take an almost holomorphic extension of $p$. Using the property (B) one can show that $\Im \phi (t,x,\eta )\ge C^{-1}t^{k+1}$ and from that we can obtain (a microlocalized version of) (\ref{in.15}) quite easily. 

Finally, we prefered a variant that we shall now outline: Let 
$$
Tu(x)=Ch^{-\frac{3n}{4}}\int e^{\frac{i}{h}\phi (x,y)}u(y)dy,
$$
be an FBI -- or (generalized) Bargmann-Segal transform that we treat in the spirit of Fourier integral operators with complex phase as in 
\cite{Sj82}. Here $\phi $ is holomorphic in a neighborhood of $(x_0,y_0)\in {\bf C}^n\times {\bf R}^n$, and $-\phi '_y(x_0,y_0)=\eta _0\in {\bf R}^n$, $\Im \phi ''_{y,y}(x_0,y_0)>0$, $\det \phi ''_{x,y}(x_0,y_0)\ne 0$. Let $\kappa _t$ be the associated canonical transformation. Then microlocally, $T$ is bounded $L^2\to H_{\Phi _0}:=\mathrm{Hol\,}(\Omega )\cap L^2(\Omega ,e^{-2\Phi _0/h}L(dx))$ and has (microlocally) a bounded inverse, where $\Omega $ is a small complex neighborhood of $x_0$ in ${\bf C}^n$. Here the weight $\Phi _0$ is smooth and strictly pluri-subharmonic. If $\Lambda _{\Phi _0}:=\{ (x,\frac{2}{i}\frac{\partial \Phi _0}{\partial x});\, x\in \mathrm{neigh\,}(x_0)\}$, then (in the sense of germs) $\Lambda _{\Phi _0}=\kappa _T(T^*X)$. The conjugated operator $\widetilde{P}=TPT^{-1}$ can be defined locally modulo ${\cal O}(h^\infty )$ (see also \cite{LaSj}) as a bounded operator from $H_\Phi \to H_\Phi $ provided that the weight $\Phi $ is smooth and satisfies $\Phi '-\Phi _0'={\cal O}(h^\delta) $ for some $\delta >0$. (In the analytic frame work this condition can be relaxed.) Egorov's theorem applies in this situation, so the leading symbol $\widetilde{p}$ of $\widetilde{P}$ is given by $\widetilde{p}\circ \kappa _T=p$. Thus (under the assumptions of Theorem \ref{in2}) we have ${{\Re \widetilde{p}}_\vert}_{\Lambda _{\Phi _0}}\ge 0$, which in turn can be used to see that for $0\le t\le h^\delta $, we have $e^{-t\widetilde{P}/h}={\cal O}(1)$: 
$H_{\Phi _0}\to H_{\Phi _t}$, where $\Phi _t\le \Phi _0$ is determined by the real Hamilton-Jacobi problem
\ekv{in.20}
{
\frac{\partial \Phi _t}{\partial t}+\Re \widetilde{p}(x,\frac{2}{i}\frac{\partial \Phi _t}{\partial x})=0,\ \Phi _{t=0}=\Phi _0.
}
Now the bound (\ref{in.15}) follows from the estimate
\ekv{in.21}
{
\Phi _t\le \Phi _0-\frac{t^{k+1}}{C}
}
where $C>0$. An easy proof of (\ref{in.21}) is to represent the I-Lagrangian manifold $\Lambda _{\Phi _t}$ as the image under $\kappa _T$ of the I-Lagrangian manifold $\Lambda _{G_t}=\{ \rho +iH_{G_t}(\rho );\,\rho \in \mathrm{neigh\,}(\rho _0,T^*X)\}$, where $H_{G_t}$ denotes the Hamilton field of $G_t$. It turns out that the $G_t$ are given by the real Hamilton-Jacobi problem
\ekv{in.22}
{
\frac{\partial G_t}{\partial t}+\Re (p(\rho +iH_{G_t}(\rho )))=0,\ G_0=0,
}
and there is a simple minimax type formula expressing $\Phi _t$ in terms of $G_t$, so it suffices to show that 
\ekv{in.23}
{
G_t\le -t^{k+1}/C.
}

This estimate is quite simple to obtain: (\ref{in.22}) first implies that $G_t\le 0$, so $(\nabla G_t)^2={\cal O}(G_t)$. Then if we Taylor expand (\ref{in.22}), we get
$$
\frac{\partial G_t}{\partial t}+H_{\Im p}(G_t)+{\cal O}(G_t)+\Re p(\rho )=0
$$
and we obtain (\ref{in.23}) from a simple differential inequality and an estimate for certain integrals of $\Re p$.

\par The use of the representation with $G_t$ is here very much taken from the joint work \cite{HeSj86} with B.~Helffer.

\par In Section \ref{ex} we discuss some examples.

\section{IR-manifolds close to ${\bf R}^{2n}$ and their FBI-representations}\label{ge}
\setcounter{equation}{0}

Much of this section is just an adaptation of the discussion in
\cite{HeSj86} with the difference that we here use the simple
FBI-transforms of generalized Bargmann type from \cite{Sj82}, rather
than the more complicated variant that was necessary to treat a
neighborhood of infinity in the resonance theory of \cite{HeSj86}.

We shall work locally. Let $G(y,\eta )\in C^\infty
(\mathrm{neigh\,}((y_0,\eta _0),{\bf R}^{2n}))$ be real-valued and
small in the $C^{\infty }$ topology. Then 
$$
\Lambda _G=\{ (y,\eta )+iH_G(y,\eta );\, (y,\eta )\in \mathrm{neigh\,}((y_0,\eta _0))\},\ H_G=\frac{\partial G}{\partial \eta }\frac{\partial }{\partial y
}-
\frac{\partial G}{\partial y}\frac{\partial }{\partial \eta }
$$
is an $I$-Lagrangian manifold, i.e.~a Lagrangian manifold for the real
symplectic form $\Im \sigma $, where $\sigma $ denotes the complex
symplectic form $\sum_1^n d\widetilde{\eta }_j\wedge
d\widetilde{y}_j$. Here, for notational reasons we reserve the
notation $(y,\eta )$ for the real cotangent variables and let the
tilde indicate that we take the corresponding complexified variables.

We may also represent $\Lambda _G$ by means of a nondegenerate phase
function in the sense of H\"ormander in the following way: 

Consider 
$$
\psi (\widetilde{y},\eta )=-\eta \cdot \Im \widetilde{y}+G(\Re
\widetilde{y},\eta )
$$
where $\widetilde{y}$ is complex and $\eta $ real according to the
convention above. Then 
$$
\nabla _\eta \psi (\widetilde{y},\eta )=-\Im \widetilde{y}+\nabla
_\eta G(\Re \widetilde{y},\eta ),
$$  
and since $G$ is small, we see that $d\frac{\partial \psi }{\partial
  \eta _1},...,d\frac{\partial \psi }{\partial \eta _n}$ are linearly
independent. So $\psi $ is indeed a nondegenerate phase function if we drop the classical requirement of
homogeneity in the $\eta $ variables.

Let 
$$
C_\psi =\{ (\widetilde{y},\eta )\in \mathrm{neigh\,}((y_0,\eta _0),{\bf C}^n\times {\bf R}^n);\, \nabla _\eta \psi =0\}
$$ and consider the corresponding I-Lagrangian manifold
$$
\Lambda _\psi =\{ (\widetilde{y},\frac{2}{i}\frac{\partial \psi
}{\partial \widetilde{y}}(\widetilde{y},\eta ));\, (\widetilde{y},\eta
)\in C_\psi \} .
$$
Here we adopt the convention that $\frac{\partial }{\partial
  \widetilde{y}}$ denotes the holomorphic derivative, since
$\widetilde{y}$ are complex variables:
$$
\frac{\partial }{\partial \widetilde{y}}=\frac{1}{2}(\frac{\partial
}{\Re \widetilde{y}}+\frac{1}{i}\frac{\partial }{\partial \Im \widetilde{y}}).
$$
Let us first check that that $\Lambda _\psi $ is I-Lagrangian, using
only that $\psi $ is a nondegenerate phase function: That
$\Lambda _\psi $ is a submanifold with the correct real dimension $=2n$
is classical since we can identify $\frac{2}{i}\frac{\partial \psi
}{\partial \widetilde{y}}$ with $\nabla _{\Re \widetilde{y}, \Im
  \widetilde{y}}\psi $. Further,
\begin{eqnarray*}
&&-\Im {{(\widetilde{\eta }\cdot d\widetilde{y})}_\vert}_{\Lambda
  _\psi }\simeq -\Im {{(\frac{2}{i}\frac{\partial \psi }{\partial
  \widetilde{y}}\cdot d\widetilde{y})}_\vert}_{C_{\psi }}=\\
&&-\frac{1}{2i}{{(\frac{2}{i}\frac{\partial \psi }{\partial
      \widetilde{y}}d\widetilde{y}+\frac{2}{i}\frac{\partial \psi
    }{\partial
      \overline{\widetilde{y}}}d\overline{\widetilde{y}})}_\vert}_{C_\psi }=
{{(\frac{\partial \psi }{\partial
      \widetilde{y}}d\widetilde{y}+\frac{\partial \psi }{\partial
      \overline{\widetilde{y}}}d\overline{\widetilde{y}})}_\vert}_{C_\psi}\\
&&={{d\psi }_\vert}_{C_\psi }
\end{eqnarray*}
which is a closed form and using that $\Im \sigma =d\Im (\widetilde{\eta
}\cdot d\widetilde{y})$, we get 
$$
-{{\Im \sigma }_\vert}_{\Lambda _\psi }=0.
$$

We next check for our specific phase $\psi $ that $\Lambda _{\psi
  }=\Lambda _G$: If $(\widetilde{y},\frac{2}{i}\frac{\partial \psi
  }{\partial \widetilde{y}}(\widetilde{y},\eta ))$ is a general point
  on $\Lambda _\psi $, then $\Im \widetilde{y}=\nabla _\eta G(\Re
  \widetilde{y},\eta )$ and
\begin{eqnarray*}
\frac{2}{i}\frac{\partial \psi }{\partial
  \widetilde{y}}(\widetilde{y},\eta )&=&\frac{2}{i}\frac{1}{2}
(\frac{\partial }{\partial \Re
  \widetilde{y}}+\frac{1}{i}\frac{\partial }{\partial \Im
  \widetilde{y}})
(-\eta \cdot \Im \widetilde{y} +G(\Re \widetilde{y},\eta ))\\
&=& -(\frac{\partial }{\partial \Im \widetilde{y}}+i\frac{\partial
}{\partial \Re \widetilde{y}})(-\eta \cdot \Im \widetilde{y} +G(\Re
\widetilde{y},\eta ))\\
&=& \eta -i\nabla _y G(\Re \widetilde{y},\eta ).
\end{eqnarray*}
Hence 
$$
(\widetilde{y},\frac{2}{i}\frac{\partial \psi }{\partial
  \widetilde{\eta }})
=(y,\eta )+iH_G(y,\eta ),
$$
if we choose $y=\Re \widetilde{y}$.
\hfill{$\Box$}

\medskip Now consider an FBI (or generalized Bargmann-Segal) transform
$$
Tu(x;h)=h^{-\frac{3n}{4}}\int e^{i\phi (x,y)/h}a(x,y;h)u(y)u(y)dy,
$$
where $\phi $ is holomorphic near $(x_0,y_0)\in {\bf C}^n\times {\bf
  R}^n$, $\Im \phi ''_{y,y}>0$, $\det \phi ''_{x,y}\ne 0$,
$-\frac{\partial \phi }{\partial y}=\eta _0\in {\bf R}^n$, and $a$ is holomorphic in the same neighborhood with $a\sim a_0(x,y)+ha_1(x,y)+...$ in the space of such functions with $a_0\ne 0$. We can view
$T$ as a Fourier integral operator with complex phase and the
associated canonical transformation is
$$
\kappa =\kappa _T:\ (y,-\frac{\partial  \phi }{\partial y}(x,y))\mapsto
(x,\frac{\partial \phi }{\partial x}(x,y))
$$
from a complex neighborhood of $(y_0,\eta _0)$ to a complex
neighborhood of $(x_0,\xi _0)$, where $\xi _0=\frac{\partial \phi
}{\partial x}(x_0,y_0)$. Complex canonical transformations preserve
the class of I-Lagrangian manifolds and (locally), 
$$
\kappa ({\bf R}^{2n})=\Lambda _{\Phi _0}=\{
(x,\frac{2}{i}\frac{\partial \Phi _0}{\partial x}(x));\ x\in
\mathrm{neigh\,}(x_0,{\bf C}^n)\},
$$
where $\Phi _0$ is smooth and strictly plurisubharmonic. Actually,
\ekv{ir.1}
{
\Phi _0(x)=\sup_{y\in {\bf R}^n}-\Im \phi (x,y),
}
where the supremum is attained at the nondegenerate point of maximum $y_c(x)$ (\cite{Sj82}).
\begin{prop}\label{ir1}
We have $\kappa (\Lambda _G)=\Lambda _{\Phi _G}$, where
\ekv{ir.2}
{
\Phi _G(x)=\mathrm{v.c.}_{\widetilde{y},\eta }-\Im \phi
(x,\widetilde{y})-\eta \cdot \Im \widetilde{y}+G(\Re
\widetilde{y},\eta ),
}
and the critical value is attained at a nondegenerate critical point.
Here $\mathrm{v.c.}_{\widetilde{y},\eta }(...)$ means ``critical value
with respect to $\widetilde{y},\eta $ of ...''. 
\end{prop}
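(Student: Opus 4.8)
The plan is to combine the identity $\Lambda _G=\Lambda _\psi$ just established — with $\psi (\widetilde y,\eta )=-\eta \cdot \Im \widetilde y+G(\Re \widetilde y,\eta )$ — with the classical rule that a canonical transformation given by a generating function acts on an $I$-Lagrangian manifold presented by a nondegenerate phase function simply by adding the two phases and passing to a new critical set. Concretely, I would introduce the total phase
$$
F(x,\widetilde y,\eta )=-\Im \phi (x,\widetilde y)+\psi (\widetilde y,\eta )=-\Im \phi (x,\widetilde y)-\eta \cdot \Im \widetilde y+G(\Re \widetilde y,\eta ),
$$
which is exactly the quantity under $\mathrm{v.c.}$ in \nr{ir.2}, viewed as a real-valued $C^\infty $ function of the base variable $x\in \mathrm{neigh\,}(x_0,{\bf C}^n)$ and of the $3n$ real fiber variables $(\Re \widetilde y,\Im \widetilde y,\eta )$. (Here $\phi $ is extended holomorphically to complex $\widetilde y$ and $-\Im \phi $ denotes the imaginary part of the resulting number, hence real but not holomorphic in $\widetilde y$.) The first task is to show that for $x$ near $x_0$ the function $F(x,\cdot ,\cdot )$ has a unique critical point $(\widetilde y_c(x),\eta _c(x))$ near $(y_0,\eta _0)$, that it is nondegenerate, and that it depends smoothly on $x$; then $\Phi _G(x):=F(x,\widetilde y_c(x),\eta _c(x))$ is a well-defined real $C^\infty $ function and \nr{ir.2} makes sense.

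Granting this, the heart of the proof is a chain of identifications obtained by direct differentiation. Since $\phi $ is holomorphic in $x$ one has $\frac{2}{i}\partial _x(-\Im \phi )=\partial _x\phi $, and combined with fiber-criticality and the chain rule this gives $\frac{2}{i}\partial _x\Phi _G(x)=\partial _x\phi (x,\widetilde y_c)$. Because $-\Im \phi $ does not depend on $\eta $, the equation $\partial _\eta F=0$ is precisely $\partial _\eta \psi =0$, i.e.\ $(\widetilde y_c,\eta _c)\in C_\psi $; and, using once more the holomorphy of $\phi $ (and that $F$ is real-valued), the equations $\partial _{\Re \widetilde y}F=\partial _{\Im \widetilde y}F=0$ reduce to the single complex equation $\frac{2}{i}\partial _{\widetilde y}\psi (\widetilde y_c,\eta _c)=-\partial _{\widetilde y}\phi (x,\widetilde y_c)$, which says exactly that the point $(\widetilde y_c,\frac{2}{i}\partial _{\widetilde y}\psi (\widetilde y_c,\eta _c))\in \Lambda _\psi $ lies over the base point $x$ under $\kappa $ (solvability in $x$ being ensured by $\det \phi ''_{x,y}\ne 0$); its image under $\kappa $ is then $(x,\partial _x\phi (x,\widetilde y_c))=(x,\frac{2}{i}\partial _x\Phi _G(x))$. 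Running this argument in both directions, and using uniqueness of the fiber-critical point, shows that the fiber of $\kappa (\Lambda _G)=\kappa (\Lambda _\psi )$ over each $x$ is the single point $(x,\frac{2}{i}\partial _x\Phi _G(x))$, i.e.\ $\kappa (\Lambda _G)=\Lambda _{\Phi _G}$. No separate check that $\Phi _G$ is real-valued is needed since $F$ itself is; alternatively, $\kappa $ preserves $\Im \sigma $, so $\kappa (\Lambda _G)$ is automatically $I$-Lagrangian, and an $I$-Lagrangian germ that is a graph over ${\bf C}^n_x$ is necessarily of the form $\Lambda _\Phi $ with $\Phi $ real.

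The step I expect to be the main obstacle is the one deferred above: the existence and nondegeneracy of the fiber-critical point of $F$, together with the bookkeeping of the $\frac{2}{i}$-normalizations, the holomorphic-versus-real derivatives, and the two meanings of $\Im $. I would dispose of it first in the model case $G=0$: there $\partial _\eta F=0$ forces $\Im \widetilde y=0$, the problem collapses to the variational problem $\Phi _0(x)=\sup _{y\in {\bf R}^n}(-\Im \phi (x,y))$ of \nr{ir.1} whose maximum at $y_c(x)$ is nondegenerate, and a short block computation of the $3n\times 3n$ Hessian of $F$ in $(\Re \widetilde y,\Im \widetilde y,\eta )$ — using $\Im \phi ''_{y,y}>0$ (strict plurisubharmonicity of $\Phi _0$), the Cauchy--Riemann relations for $\phi $, and the fact that the only coupling to $\eta $ is the nondegenerate pairing $-\eta \cdot \Im \widetilde y$ — shows this Hessian is invertible: concretely $F''$ has vanishing $(\Re \widetilde y,\eta )$-block, $(\Im \widetilde y,\eta )$-block equal to $-I$, and negative definite $(\Re \widetilde y,\Re \widetilde y)$-block, which suffices. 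Since $G$ is small in $C^\infty $, the implicit function theorem then produces, for every such $G$ and every $x$ near $x_0$, a unique nearby critical point that is again nondegenerate and smooth in $x$; in particular $\kappa (\Lambda _G)$ stays a graph over ${\bf C}^n_x$ and $\Phi _G$ inherits the smoothness. This completes the argument.
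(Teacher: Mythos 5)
Your proposal is correct and follows essentially the same route as the paper: you write down the fiber-critical equations for the total phase, use the holomorphy of $\phi$ to identify them with the condition that $\kappa$ maps a point of $\Lambda_G=\Lambda_\psi$ to a point over $x$, verify nondegeneracy of the Hessian at $G=0$ by the same block computation (and perturb by smallness of $G$), and compute $\frac{2}{i}\partial_x\Phi_G=\partial_x\phi(x,\widetilde y_c)$ by differentiating the critical value. No gaps; the only additions relative to the paper's proof are the explicit invocation of the implicit function theorem and the remark on realness, both harmless.
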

\begin{proof}
At a critical point we have
\begin{eqnarray*}
&\Im \widetilde{y}=\nabla _\eta G(\Re \widetilde{y},\eta ),&\\
&\frac{\partial }{\partial \Im \widetilde{y}}\Im \phi
(x,\widetilde{y})+\eta =0,&\\
&-\frac{\partial }{\partial \Re \widetilde{y}}\Im \phi
(x,\widetilde{y})+(\nabla _y G)(\Re \widetilde{y},\eta )=0.&
\end{eqnarray*}
If $f(z)$ is a holomorphic function, then 
\ekv{ir.3}{\frac{\partial }{\partial \Im z}\Im f=\Re 
\frac{\partial f}{\partial z}
,\ \frac{\partial }{\partial \Re z}\Im f=
\Im \frac{\partial f}{\partial z},}
so the equations for our critical point become
\begin{eqnarray*}
\Im \widetilde{y}&=&\nabla _\eta G(\Re \widetilde{y},\eta ),\\
\eta &=&-\Re \frac{\partial \phi }{\partial \widetilde{y}}(x,\widetilde{y}),\\
-\nabla _yG(\Re \widetilde{y},\eta )&=&-\Im \frac{\partial \phi
}{\partial \widetilde{y}},
\end{eqnarray*}
or equivalently,
$$
(\widetilde{y},-\frac{\partial \phi }{\partial
  \widetilde{y}}(x,\widetilde{y}))=(\Re \widetilde{y},\eta )+iH_G(\Re
\widetilde{y},\eta ),
$$  
which says that the critical point $(\widetilde{y},\eta )$ is
determined by the condition that $\kappa _T$ maps the point
$(\widetilde{y},\widetilde{\eta })\in \Lambda _G$ to a point $(x,\xi
)$, situated over $x$. Clearly the critical point is nondegenerate. We
check it when $G=0$: The Hessian matrix with respect to the variables
$\Re y,\Im y,\eta $ becomes
$$
\left(\begin{array}{ccc}-\Im \phi ''_{y,y} &B &0\\
\trans{B} &C &-1\\
0 &-1 &0
 \end{array}\right)
$$
which is nondegenerate independently of $B,C$.

\par If $\Phi (x)$ denotes the critical value in (\ref{ir.2}), it
remains to check that $\frac{2}{i}\frac{\partial \Phi }{\partial
  x}=\xi $ where $\xi =\frac{\partial \phi }{\partial
  x}(x,\widetilde{y})$, $(\widetilde{y},\eta )$ denoting the critical
point. However, since $\Phi $ is a critical value, we get
$$
\frac{2}{i}\frac{\partial \Phi }{\partial x}=\frac{2}{i}\frac{\partial
}{\partial x}(-\Im \phi (x,\widetilde{y}))=\frac{\partial \phi
}{\partial x}(x,\widetilde{y}).
$$
\end{proof}

\par Also notice that when $G=0$, the formula (\ref{ir.2}) produces
the same function as (\ref{ir.1}).

\par Write $\widetilde{y}=y+i\theta $ and consider the function
\ekv{ir.5}
{
f(x;y,\eta ;\theta )=-\Im \phi (x,y+i\theta )-\eta \cdot \theta ,
}
which appears in (\ref{ir.2}).
\begin{prop}\label{ir2}
$f$ is a nondegenerate phase function with $\theta $ as fiber
variables which generates a canonical transformation which can be
identified with $\kappa _T$.
\end{prop}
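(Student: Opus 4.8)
The plan is to verify the two defining properties of a nondegenerate phase function directly from the formula \nr{ir.5}, and then to compute the associated canonical relation and identify it with $\kappa_T$. First I would check nondegeneracy: the fiber variables are $\theta\in{\bf R}^n$, and we must see that $d\left(\frac{\partial f}{\partial\theta_1}\right),\dots,d\left(\frac{\partial f}{\partial\theta_n}\right)$ are linearly independent on the critical set $C_f=\{\nabla_\theta f=0\}$. Using \nr{ir.3} with $f$ holomorphic in $\widetilde y = y+i\theta$, we have $\frac{\partial f}{\partial\theta} = -\Re\frac{\partial\phi}{\partial\widetilde y}(x,y+i\theta)-\eta$, so $\frac{\partial^2 f}{\partial\theta\partial\eta} = -I$ is already invertible; hence the differentials $d\frac{\partial f}{\partial\theta_j}$ are independent because of their $\eta$-components alone, and $f$ is a nondegenerate phase function (again dropping homogeneity, as for $\psi$ above). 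In fact this is essentially the same computation already done for $\psi$ in the proof of Proposition \ref{ir1} in the case $G=0$, since $f(x;y,\eta;\theta) = \psi(\widetilde y,\eta) - \Im\phi(x,\widetilde y)$ with $G=0$.

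Next I would compute the canonical transformation generated by $f$. On $C_f$ we have $\eta = -\Re\frac{\partial\phi}{\partial\widetilde y}(x,\widetilde y)$; the phase $f$ with fiber variables $\theta$ and base variables $(x;y,\eta)$ generates the relation sending $\left((y,\eta),-\frac{\partial f}{\partial(y,\eta)}\right)$ to $\left(x,\frac{\partial f}{\partial x}\right)$, evaluated on $C_f$. Using \nr{ir.3} once more, $\frac{\partial f}{\partial x} = \frac{2}{i}\frac{\partial}{\partial x}(-\Im\phi(x,\widetilde y))$ reproduces $\frac{\partial\phi}{\partial x}(x,\widetilde y)$ (the holomorphic $x$-derivative, as in the last display of the proof of Proposition \ref{ir1}), while $\frac{\partial f}{\partial y} = -\Im\frac{\partial\phi}{\partial\widetilde y}(x,\widetilde y)$. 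Combining the $y$- and $\eta$-outputs, the point $\left((y,\eta),(y^*,\eta^*)\right)$ in the source is exactly $(\Re\widetilde y,\eta) + $ (imaginary part encoding $-\frac{\partial\phi}{\partial\widetilde y}$), i.e. the source point is $(\widetilde y,\widetilde\eta)$ with $\widetilde\eta = -\frac{\partial\phi}{\partial\widetilde y}(x,\widetilde y)$, and it is mapped to $\left(x,\frac{\partial\phi}{\partial x}(x,\widetilde y)\right)$. That is precisely the graph of $\kappa_T$ as defined above, so the two canonical transformations coincide.

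The only genuinely substantive point — and thus the main (mild) obstacle — is the bookkeeping that reconciles the \emph{real} Fourier-integral-operator formalism (real phase function $f$ of the real variables $(x;y,\eta;\theta)$, generating a real canonical relation in $T^*({\bf R}^n_x)\times T^*({\bf R}^n_{y})\times\cdots$) with the \emph{complex} canonical transformation $\kappa_T$ acting on complexified cotangent space: one has to use the identification, already invoked in the proof of Proposition \ref{ir1}, of $\frac{2}{i}\frac{\partial}{\partial x}$ (holomorphic derivative of a function of $x\in{\bf C}^n$) with $\nabla_{\Re x,\Im x}$ of the corresponding real function, and similarly track how the real fiber variable $\theta = \Im\widetilde y$ becomes the imaginary part of the complexified position. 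Once this dictionary is fixed, everything reduces to the identities \nr{ir.3} together with the critical-point equations from \nr{ir.2}, all of which were established in the proof of Proposition \ref{ir1}; the nondegeneracy of the generated canonical transformation follows from $\det\phi''_{x,y}\ne 0$ exactly as there. I would therefore present the proof as a short corollary of those computations rather than redoing them.
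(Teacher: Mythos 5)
Your proof is correct and follows essentially the same route as the paper: compute $\partial_\theta f=-\Re\frac{\partial\phi}{\partial\widetilde y}(x,y+i\theta)-\eta$ to get nondegeneracy (the paper leaves implicit what you make explicit via $\partial^2 f/\partial\theta\partial\eta=-I$), then write out the generated graph and recognize $\kappa_T$ after the same reshuffling of $(y,\theta)$ into $\widetilde y$ and sign adjustments identifying the $(y,\eta)$-conjugate variables with $-\frac{\partial\phi}{\partial\widetilde y}$. The only difference is presentational: you lean on the computations already done for $\psi$ in Proposition \ref{ir1}, while the paper redoes the two-line computation directly.
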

\begin{proof}
$$
\frac{\partial }{\partial \theta }f=-\Re \frac{\partial \phi
}{\partial \widetilde{y}}(x,y+i\theta )-\eta ,
$$
so $f$ is nondegenerate. The canonical relation has the graph
\begin{eqnarray*}
&\{ (x,\frac{\partial \phi }{\partial x};y,\eta ,\frac{\partial
}{\partial y}\Im \phi (x,y+i\theta ),\theta );\ \eta =-\Re
\frac{\partial \phi }{\partial \widetilde{y}}(x,y+i\theta )\}&\\
&=\{ (x,\frac{\partial \phi }{\partial x}(x,y+i\theta );y,-\Re
\frac{\partial \phi }{\partial \widetilde{y}}(x,y+i\theta ),\Im
\frac{\partial \phi }{\partial \widetilde{y}}(x,y+i\theta ),\theta )\}&,
\end{eqnarray*}
and up to reshuffling of the components on the preimage side and changes of signe, we
recognize the graph of $\kappa _T$.
\end{proof}

\par Now we have the following easily verified fact:
\begin{prop}\label{ir3}
Let $f(x,y,\theta )\in C^\infty (\mathrm{neigh\,}(x_0,y_0,\theta
_0),{\bf R}^n\times {\bf R}^n\times {\bf R}^N)$ be a nondegenerate phase
function with $(x_0,y_0,\theta _0)\in C_\phi $, generating a canonical
transformation which maps $(y_0,\eta _0)=(y_0,-\nabla
_yf(x_0,y_0,\theta _0))$ to $(x_0,\nabla _xf(x_0,y_0,\theta _0))$. If
$g(y)$ is smooth near $y_0$ with $\nabla g(y_0)=\eta _0$ and
$$h(x)=\mathrm{v.c.}_{y,\theta }f(x,y,\theta )+g(y)$$ is well-defined
with a nondegenerate critical point close to $(y_0,\theta _0)$ for
$x$ close to $x_0$, then we have the inversion formula,
$$
g(y)=\mathrm{v.c.}_{x,\theta }-f(x,y,\theta )+h(x),
$$ 
for $y\in \mathrm{neigh\,}(y_0)$, where the critical point is
nondegenerate and close to $(x_0,\theta _0)$.
\end{prop}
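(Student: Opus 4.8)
The plan is to reduce the asserted inversion formula to a Legendre-type duality between critical values, following the standard pattern for generating functions of canonical transformations. First I would observe that the hypothesis says precisely that $\kappa$, the canonical transformation generated by $f$, satisfies $\kappa(y_0,\eta_0) = (x_0,\xi_0)$ with $\eta_0 = -\nabla_y f(x_0,y_0,\theta_0)$ and $\xi_0 = \nabla_x f(x_0,y_0,\theta_0)$; and that $h(x) = \mathrm{v.c.}_{y,\theta}\,(f(x,y,\theta)+g(y))$ generates the I-Lagrangian manifold $\kappa(\Lambda_g)$ exactly as in Proposition~\ref{ir1}. Concretely, at the critical point $(y(x),\theta(x))$ defining $h(x)$ one has $\nabla_\theta f = 0$, $\nabla_y f(x,y,\theta) + \nabla g(y) = 0$, and then $\nabla h(x) = \nabla_x f(x,y(x),\theta(x))$; so the point $(x,\nabla h(x))$ is the image under $\kappa$ of the point $(y(x), \nabla g(y(x))) \in \Lambda_g$. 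Because $h$ is well-defined with a nondegenerate critical point near $(y_0,\theta_0)$ by assumption, the map $x \mapsto y(x)$ is a local diffeomorphism near $x_0$ (its differential is invertible; this follows from the nondegeneracy of the critical point together with $\det \kappa' \ne 0$), so it has a smooth inverse $x = x(y)$ near $y_0$.

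Next I would set up the candidate right-hand side $\widetilde g(y) := \mathrm{v.c.}_{x,\theta}\,\bigl(-f(x,y,\theta) + h(x)\bigr)$ and identify its critical point. Differentiating in $\theta$ gives $-\nabla_\theta f = 0$; differentiating in $x$ gives $-\nabla_x f(x,y,\theta) + \nabla h(x) = 0$. Using the expression $\nabla h(x) = \nabla_x f(x, y(x), \theta(x))$ from the previous paragraph, the second equation says that $(x,\theta)$ must be chosen so that $\nabla_x f$ at $(x,y,\theta)$ coincides with $\nabla_x f$ at $(x, y(x), \theta(x))$; combined with the $\theta$-equation this forces, by the injectivity built into the generating-function formalism, $y = y(x)$, i.e. $x = x(y)$ and $\theta = \theta(x(y))$. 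Thus the critical point of the $(x,\theta)$-problem is uniquely $x = x(y)$, $\theta = \theta(x(y))$, which is close to $(x_0,\theta_0)$ for $y$ near $y_0$, and its nondegeneracy follows from the same Hessian computation as in Proposition~\ref{ir1} (the bordered structure of the Hessian is nondegenerate independently of the lower-order blocks), or alternatively from the fact that a Legendre transform of a nondegenerate critical-value function is again nondegenerate.

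It then remains to check the values match: at the common critical point $(x,\theta) = (x(y), \theta(x(y)))$ we have, writing $x = x(y)$, $\theta = \theta(x)$, $y = y(x)$,
$$
\widetilde g(y) = -f(x,y,\theta) + h(x) = -f(x,y,\theta) + \bigl(f(x,y,\theta) + g(y)\bigr) = g(y),
$$
using that $h(x) = f(x,y(x),\theta(x)) + g(y(x))$ is the critical value and $y = y(x)$. This gives $\widetilde g(y) = g(y)$ for $y \in \mathrm{neigh\,}(y_0)$, as claimed.

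The main obstacle I anticipate is not the value computation, which collapses algebraically, but the bookkeeping needed to guarantee that the $(x,\theta)$-critical problem defining $\widetilde g$ has a \emph{unique} critical point near $(x_0,\theta_0)$ and that this critical point is nondegenerate: one must make the implicit-function-theorem argument linking the $\theta$-equation $\nabla_\theta f = 0$ and the $x$-equation $\nabla_x f = \nabla h$ to the inverse diffeomorphism $y \mapsto x(y)$ fully rigorous, using the nondegeneracy hypothesis on $h$ and $\det \phi''_{x,y} \ne 0$ (equivalently $\det \kappa' \ne 0$) to control the relevant Hessian blocks. Once the critical point is correctly pinned down, nondegeneracy follows from the bordered-Hessian argument already used in the proof of Proposition~\ref{ir1}.
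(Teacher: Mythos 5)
The paper offers no proof of this proposition (it is introduced as an ``easily verified fact''), so there is nothing to compare against; your route -- identify the critical point of the $(x,\theta)$-problem as $x=x(y)$, $\theta=\theta(x(y))$ and let the critical value telescope -- is the standard Legendre-type duality and is the right one. The critical-point identification and the value computation are correct as written.

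The one step that does not hold up as justified is the nondegeneracy of the $(x,\theta)$-Hessian. The ``bordered'' Hessian in Proposition \ref{ir1} owed its automatic invertibility to the explicit $-\eta\cdot\Im\widetilde{y}$ term in that particular phase (the off-diagonal $-1$ blocks); the Hessian here, with blocks $h''-f''_{xx}$, $-f''_{x\theta}$, $-f''_{\theta x}$, $-f''_{\theta\theta}$, has no such structure, so that argument does not transfer. The correct quick verification is: differentiate $h'(x)=\nabla_xf(x,y(x),\theta(x))$ to get $h''-f''_{xx}=f''_{xy}\,y'(x)+f''_{x\theta}\,\theta'(x)$, and differentiate $\nabla_\theta f(x,y(x),\theta(x))=0$ to get $f''_{\theta x}=-f''_{\theta y}\,y'-f''_{\theta\theta}\,\theta'$. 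A kernel vector $(\delta x,\delta\theta)$ of the Hessian then satisfies, with $u=y'\delta x$ and $v=\theta'\delta x-\delta\theta$, the system $f''_{xy}u+f''_{x\theta}v=0$, $f''_{\theta y}u+f''_{\theta\theta}v=0$. The matrix of this system is (the transpose of) exactly the matrix whose invertibility expresses that the canonical relation of $f$ projects diffeomorphically to the $(y,\eta)$-side, i.e.\ the hypothesis that $f$ generates a canonical transformation; hence $u=v=0$, and since $y'(x)$ is invertible by your composition-of-diffeomorphisms remark, $\delta x=0$ and then $\delta\theta=0$. With this substituted for the appeal to Proposition \ref{ir1}, and with local uniqueness of the critical point deduced from the implicit function theorem at $(x_0,\theta_0)$ rather than from ``injectivity built into the formalism'', your proof is complete.
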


\par Combining the three propositions, we get
\begin{prop}\label{ir4}
\ekv{ir.6}
{
G(y,\eta )=\mathrm{v.c.}_{x,\theta }\Im \phi (x,y+i\theta )+\eta \cdot
\theta +\Phi _G(x).
}
\end{prop}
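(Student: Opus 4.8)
The plan is to combine Propositions~\ref{ir1}, \ref{ir2} and \ref{ir3} by a straightforward bookkeeping of critical values. By Proposition~\ref{ir2}, the function $f(x;y,\eta ;\theta )=-\Im \phi (x,y+i\theta )-\eta \cdot \theta $ is a nondegenerate phase function in the fiber variables $\theta $, generating a canonical transformation that we identify with $\kappa _T$; here the base variables are $(x)$ and $(y,\eta )$, so in the notation of Proposition~\ref{ir3} we should read $(y,\eta )$ as the ``$y$''-variable and $\theta $ as the ``$\theta $''-variable of that proposition. The point is to apply Proposition~\ref{ir3} with $g(y,\eta )=G(\Re y,\eta )$ evaluated at real $\widetilde y=y$, i.e.\ with the ``extra'' smooth function being $G$ itself.

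First I would check the hypotheses of Proposition~\ref{ir3}. Formula (\ref{ir.2}) says precisely that
$$
\Phi _G(x)=\mathrm{v.c.}_{\widetilde y,\eta }\, f(x;\Re \widetilde y,\eta ;\Im \widetilde y)+G(\Re \widetilde y,\eta ),
$$
so $\Phi _G$ is exactly the function ``$h$'' produced in Proposition~\ref{ir3} from the phase $f$ and the summand $g=G$; the nondegeneracy of the critical point is part of the statement of Proposition~\ref{ir1}, and the compatibility of the gradient condition $\nabla g=\eta _0$ holds because $G$ is small, so $\nabla _{(y,\eta )}G$ at the reference point is close to the base point $\eta _0$ associated with $\kappa _T$ via $f$. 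Thus all hypotheses of Proposition~\ref{ir3} are met.

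Then the inversion formula of Proposition~\ref{ir3}, applied verbatim, yields
$$
G(\Re \widetilde y,\eta )=\mathrm{v.c.}_{x,\theta }\, \big(-f(x;\Re \widetilde y,\eta ;\theta )\big)+\Phi _G(x)
=\mathrm{v.c.}_{x,\theta }\, \Im \phi (x,y+i\theta )+\eta \cdot \theta +\Phi _G(x),
$$
which, after renaming $\Re\widetilde y$ back to $y$, is exactly (\ref{ir.6}). The only point requiring a little care is matching the variable groupings between the ``$\widetilde y,\eta$'' critical value in (\ref{ir.2}) and the ``$y,\theta$'' critical value in Proposition~\ref{ir3}: one must split $\widetilde y = \Re\widetilde y + i\Im\widetilde y$ and regard $(\Re\widetilde y,\eta)$ as frozen parameters and $\Im\widetilde y = \theta$ together with $x$ as the variables over which one inverts. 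I expect this identification — rather than any analytic estimate — to be the only real obstacle, and it is essentially notational.
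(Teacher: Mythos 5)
Your proposal is correct and is exactly the argument the paper intends: the paper's ``proof'' of Proposition~\ref{ir4} is literally the phrase ``Combining the three propositions,'' and you spell out that combination correctly, reading $(y,\eta)$ as the base variable and $\theta=\Im\widetilde y$ as the fiber variable of the phase $f$ from Proposition~\ref{ir2}, identifying $\Phi_G$ from (\ref{ir.2}) with the function $h$ of Proposition~\ref{ir3}, and invoking the inversion formula. The only slightly loose point is the gradient condition $\nabla g(y_0)=\eta_0$, which in this setting is just the statement that for $G$ small the critical point is a small perturbation of the nondegenerate one at $G=0$; this is harmless and is not elaborated in the paper either.
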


If $(\widetilde{\Phi
},\widetilde{G})$ is a second pair of functions close to $\Phi _0$,
$0$ and related through (\ref{ir.2}), (\ref{ir.6}), then
\ekv{ir.7}{G\le \widetilde{G}\hbox{ iff }\Phi \le \widetilde{\Phi }.}
Indeed, if for instance $\Phi \le \widetilde{\Phi }$, introduce $\Phi _t=t\widetilde{\Phi }+(1-t)\Phi $, so that $\partial _t\Phi _t\ge 0$. If $G_t$ is the corresponding critical value as in (\ref{ir.6}), then 
$\partial _tG_t=(\partial _t\Phi _t)(x_t)\ge 0$, where $(x_t,\theta _t)$ is the critical point.
\section{Evolution equations on the transform side}\label{ev}
\setcounter{equation}{0}

Let $\widetilde{P}(x,\xi ;h)$ be a smooth symbol defined in
$\mathrm{neigh\,}((x_0,\xi _0);\Lambda _{\Phi _0})$, with an
asymptotic expansion
$$\widetilde{P}(x,\xi ;h)
\sim \widetilde{p}(x,\xi )+h\widetilde{p}_1(x,\xi )+...\hbox{ in
}C^\infty (\mathrm{neigh\,}((x_0,\xi _0),\Lambda _{\Phi _0})).
$$
By the same letter, we denote an almost holomorphic extension to a
complex neighborhood of $(x_0,\xi _0)$:
$$\widetilde{P}(x,\xi ;h)
\sim \widetilde{p}(x,\xi )+h\widetilde{p}_1(x,\xi )+...\hbox{ in
}C^\infty (\mathrm{neigh\,}((x_0,\xi _0),{\bf C}^{2n}),
$$
where $\widetilde{p}$, $\widetilde{p}_j$ are smooth
extensions such that 
$$\overline{\partial }\widetilde{p},\ \overline{\partial
}\widetilde{p}_j={\cal O}(\mathrm{dist\,}((x,\xi ),\Lambda _{\Phi
  _0})^\infty). 
$$
Then, as developed in \cite{LaSj} and later in \cite{MeSj02}, if $u=u_h$ is holomorphic in a
neighborhood $V$ of $x_0$ and belonging to $H_{\Phi _0}(V)$ in the
sense that $\Vert u\Vert_{L^2(V,e^{-2\Phi _0/h}L(dx))}$ is finite and
of temperate growth in $1/h$ when $h$ tends to zero, then we can
define $\widetilde{P}u=\widetilde{P}(x,hD_x;h)u$ in any smaller
neighborhood $W\Subset V$ by the formula,
\ekv{ev.0.1}
{
\widetilde{P}u(x)=\frac{1}{(2\pi h)^n}\iint_{\Gamma
  (x)}e^{\frac{i}{h}(x-y)\cdot \theta
}\widetilde{P}(\frac{x+y}{2},\theta ;h)u(y)dyd\theta ,
}
where $\Gamma (x)$ is a good contour (in the sense of \cite{Sj82}) of
the form $\theta =\frac{2}{i}\frac{\partial \Phi _0}{\partial
  x}(\frac{x+y}{2})+\frac{i}{C_1}(\overline{x-y})$, $
|x-y|\le 1/C_2$, $C_1,C_2>0$. Then $\overline{\partial }\widetilde{P}$ is negligible
$H_{\Phi _0}(V)\to L^2_{\Phi _0}(W)$, i.e. of norm ${\cal O}(h^\infty
)$ and modulo such negligible operators, $\widetilde{P}$ is
independent of the choice of good contour. By solving a
$\overline{\partial }$-problem (assuming, as we may, that our neighborhoods are pseudoconvex) we can always correct $\widetilde{P}$
with a negligible operator such that (after an arbitrarily small
decrease of $W$) $\widetilde{P}={\cal O}(1):H_{\Phi _0}(V)\to H_{\Phi
  _0}(W)$.
Also, if $\Phi =\Phi _0+{\cal O}(h\ln \frac{1}{h})$ in $C^2$, then
clearly $\widetilde{P}={\cal O}(h^{-N_0}): H_{\Phi }(V)\to H_{\Phi
}(W)$, for some $N_0$. Using Stokes' formula, we can show
that $\widetilde{P}$ will change only by a negligible term if we
replace $\Phi _0$ by $\Phi $ in the definition of $\Gamma (x)$, and
then it follows that $\widetilde{P}={\cal O}(1):H_{\Phi }(V)\to 
H_{\Phi}(W)$.

Before discussing evolution equations, let us recall (\cite{MeSj02})
that the identity operator $H_{\Phi_0} (V)\to H_{\Phi_0}
 (W)$ is up to a
negligible operator of the form
\ekv{ev.0.2}
{
Iu(x)=h^{-n}\iint e^{\frac{2}{h}\Psi _0(x,\overline{y})}a(x,\overline{y};h)u(y)e^{-\frac{2}{h}\Phi _0(y)}dyd\overline{y},
}
where $\Psi _0(x,y)$, $a(x,y;h)$ are almost holomorphic on the
antidiagonal $y=\overline{x}$ with $\Psi _0(x,\overline{x})=\Phi
_0(x)$, $a(x,y;h)\sim a_0(x,y)+ha_1(x,y)+...$, $a_0(x,\overline{x})\ne
0$. More generally a
pseudodifferential operator like $\widetilde{P}$ takes the form
\eekv{ev.0.3}
{
\widetilde{P}u(x)&=&h^{-n}\iint e^{\frac{2}{h}\Psi _0(x,\overline{y})}q
(x,\overline{y};h)u(y)e^{-\frac{2}{h}\Phi _0(y)}dyd\overline{y}
}
{q_0(x,\overline{x})&=&\widetilde{p}(x,\frac{2}{i}\frac{\partial \Phi _0}{\partial
    x}(x))a_0(x,\overline{x}),}
and where $q_0$ denotes the first term in the asymptotic expansion of
the symbol $q$. In this discussion, $\Phi _0$ can be replaced by any
other smooth exponent $\Phi$ which is ${\cal O}(h^\delta ) $ close to 
$\Phi_0 $ in $C^\infty $ and we make the corresponding replacement of
$\Psi _0$. Also recall that because of the strict pluri-subharmonicity
of $\Phi $, we have
\ekv{ev.0.4}
{
2\Re \Psi (x,\overline{y})-\Phi (x)-\Phi (y)\asymp -|x-y|^2,
}
so the uniform boundedness $H_\Phi \to H_\Phi $ follows from the domination of the modulus of the effective kernel by a Gaussian convolution kernel. 

\par Next, consider the evolution problem
\ekv{ev.0.5}
{
(h\partial _t+\widetilde{P})\widetilde{U}(t)=0,\ \widetilde{U}(0)=1,
}
where $t$ is restricted to the interval $[0,h^\delta ]$ for some
arbitrarily small but fixed $\delta >0$. We review how to solve this
problem approximately by a geometrical optics construction:
Look for $\widetilde{U}(t)$ of the form
\ekv{ev.0.6}
{
\widetilde{U}(t)u(x)=h^{-n}\iint e^{\frac{2}{h}\Psi
  _t(x,\overline{y})}
a_t(x,\overline{y};h) u(y) e^{-2\Phi _0(y)/h}dyd\overline{y},
}
where $\Psi _t$, $a_t$ depend smoothly on all the variables and
$\Psi _{t=0}=\Psi _0$, $a _{t=0}=a _0$ in (\ref{ev.0.3}), so that $\widetilde{U}(0)=1$
up to a negligible operator.

\par Notice that formally $\widetilde{U}(t)$ is the Fourier integral
operator
\ekv{ev.0.7}
{
\widetilde{U}(t)u(x)=h^{-n}\iint e^{\frac{2}{h}(\Psi _t(x,\theta
  )-\Psi _0(y,\theta ))}a_t(x,\theta ;h)u(y) dyd\theta ,
}
where we choose the integration contour $\theta =\overline{y}$.
Writing $2\Psi _t(x,\theta )=i\phi _t(x,\theta )$ leads to more
customary notation and we impose the eiconal equation
\ekv{ev.0.8}
{
i\partial _t\phi +\widetilde{p}(x,\phi '_x(x,\theta ))=0.
}
Of course, we are manipulating $C^\infty $ functions in the complex
domain, so we cannot hope to solve the eiconal equation exactly, but
we can do so to infinite order at $t=0$, $x=\overline{y}=\theta $. If
we put 
\ekv{ev.0.10}
{
\Lambda _{\phi _t(\cdot ,\theta )}=\{ (x,\phi '_x(t,x,\theta ))\},
}
we have to $\infty $ order at $t=0$, $\theta =x$:
\ekv{ev.0.11}
{
\Lambda _{\phi _t(\cdot ,\theta )}=\exp (t\widehat{H}_{\frac{1}{i}\widetilde{p}})
(\Lambda _{\phi _0(\cdot ,\theta )}).
}
with
$\widehat{H}_{\widetilde{p}}=H_{\widetilde{p}}+\overline{H_{\widetilde{p}}}$
denoting the real vector field associated to the (1,0)-field $H_p$, and similarly for $\widehat{H}_{\frac{1}{i}\widetilde{p}}$. (We sometimes neglect the hat when integrating the Hamilton flows.)
At a point where $\overline{\partial }\widetilde{p}=0$, we have
\ekv{ev.0.12}
{
\widehat{H}_{\widetilde{p}}=H^{\Re \sigma }_{\Re\widetilde{p}}=H^{\Im\sigma
}_{\Im\widetilde{p}},\quad \widehat{H}_{i\widetilde{p}}=
-H^{\Re\sigma }_{\Im\widetilde{p}}=H^{\Im \sigma }_{\Re\widetilde{p}},
}
where the other fields are the Hamilton fields of $\Re \widetilde{p}$,
$\Im \widetilde{p}$ with respect to the real symplectic forms $\Re
\sigma $ and $\Im \sigma $ respectively. See \cite{Sj82, MeSj02}. Thus (\ref{ev.0.11}) can be
written
\ekv{ev.0.13}
{
\Lambda _{\phi _t(\cdot ,\theta )}=\exp (tH^{-\Im \sigma }_{\Re
  \widetilde{p}})(\Lambda _{\phi _0(\cdot ,\theta )}).
}
A complex Lagrangian manifold is also an I-Lagrangian manifold (i.e. a
Lagrangian manifold for $\Im \sigma $) so (\ref{ev.0.13}) can be
viewed as a relation between I-Lagrangian manifolds and it defines the
I-Lagrangian manifold $\Lambda _{\phi _t(\cdot ,\theta )}$
 in an
unambigious way, once we have fixed an almost holomorphic extension of
$\widetilde{p}$ and especially the real part of that function.
The general form of a smooth I-Lagrangian manifold $\Lambda $, for
which the $x$-space projection $\Lambda\ni (x,\xi ) \mapsto x\in{\bf
  C}^n$ is a local diffeomorphism, is locally $\Lambda =\Lambda _\Phi $
where $\Phi $ is real and smooth and we define 
$$
\Lambda _\Phi = \{ (x,\frac{2}{i}\frac{\partial \Phi }{\partial x});\, x\in \Omega \} ,\ \Omega \subset {\bf C}^n\hbox{ open}.
$$
With a slight abuse of notation, we can therefore identify the ${\bf
  C}$-Lagrangian manifold $\Lambda _{\phi _0}$ with the I-Lagrangian
manifold $\Lambda _{-\Im \phi _0}$, since for holomorphic functions
(or more generally where $\overline{\partial }_x\phi _0=0$, we have 
$\frac{\partial \phi _0}{\partial x}=\frac{2}{i}\frac{\partial -\Im
  \phi _0}{\partial x}$.

(\ref{ev.0.4}) shows that 
$$\Phi _0(x)+\Phi _0(\overline{\theta } )-(-\Im \phi _0 (\cdot ,\overline{\theta} ))\asymp
|x-\overline{\theta }|^2.$$ Thus, if we define 
\ekv{ev.0.16}
{
\Lambda _{\Phi _t}=\exp (tH_{\Re \widetilde{p}}^{-\Im \sigma })(\Lambda
_{\Phi _0}),
}
and fix the $t$-dependent constant in this defintition of $\Phi _t$ by
imposing the real Hamilton-Jacobi equation,
\ekv{ev.0.17}
{
\partial _t\Phi _t+\Re \widetilde{p}(x,\frac{2}{i}\frac{\partial \Phi
  _t}{\partial x})=0,\ \Phi _{t=0}=\Phi _0,
} and noticing that the real part of (\ref{ev.0.8}) is a similar
equation for $-\Im \phi _t$,
\ekv{ev.0.18}
{
\partial _t(-\Im \phi
)+\Re \widetilde{p}(x,\frac{2}{i}\frac{\partial }{\partial
  x}(-\Im \phi ))=0,
}
we get 
\ekv{ev.0.19}
{
\Phi _t(x)+\Phi _0(\overline{\theta} )-(-\Im \phi _t(x,\theta ))\asymp |x-x_t(\overline{\theta} )|^2,
}
where $(x_t(\overline{\theta} ),\xi _t(\overline{\theta} )):=\exp (tH_{\Re
  \widetilde{p}}^{-\Im \sigma })(\overline{\theta} ,\frac{2}{i}\frac{\partial \Phi
_0}{\partial x}(\overline{\theta} ))$.

Determining $a_t$ by solving a sequence of transport equations, we
arrive at the following result:
\begin{prop}\label{ev01}
The operator $\widetilde{U}(t)$ constructed above is ${\cal
  O}(1):H_{\Phi _0}(V)\to H_{\Phi _t}(W)$, ($W\Subset V$ being small pseudoconvex neighborhoods of a fixed point $x_0$) uniformly for $0\le t\le h^\delta
$ and it solves the problem (\ref{ev.0.5}) up to negligible
terms. This local statement makes sense, since by (\ref{ev.0.19}) we
have
\ekv{ev.0.20}
{
2\Re \Psi _t(x,\overline{y})-\Phi _t(x)-\Phi _0(y)\asymp -|x-x_t(y)|^2.
}
\end{prop}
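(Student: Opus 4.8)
The plan is to construct $\widetilde U(t)$ explicitly in the form (\ref{ev.0.6}) by solving the eiconal equation (\ref{ev.0.8}) to infinite order on the antidiagonal and then solving the transport equations, and to establish the mapping bound and the solvability of (\ref{ev.0.5}) by translating everything into the effective-kernel estimate (\ref{ev.0.20}). First I would recall that writing $2\Psi_t(x,\theta)=i\phi_t(x,\theta)$ turns (\ref{ev.0.6}) into the FIO (\ref{ev.0.7}) with contour $\theta=\overline y$; the phase $\phi_t$ is required to satisfy (\ref{ev.0.8}) with $\phi_{t=0}=\phi_0$ (the phase of the identity (\ref{ev.0.2})). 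Since we work with $C^\infty$ functions in the complex domain, I would solve (\ref{ev.0.8}) only to infinite order at $t=0$, $x=\overline y=\theta$, which is exactly what (\ref{ev.0.11}) encodes via the flow of $\widehat H_{\frac1i\widetilde p}$; by (\ref{ev.0.12}) this is the $-\Im\sigma$-Hamilton flow of $\Re\widetilde p$, hence the geometry descends to the I-Lagrangian statement (\ref{ev.0.13}) and, on the weight side, to the definition (\ref{ev.0.16}) of $\Lambda_{\Phi_t}$ together with the real Hamilton--Jacobi equation (\ref{ev.0.17}). Matching the real part of the eiconal equation, which is precisely (\ref{ev.0.18}), against (\ref{ev.0.17}) and using that both $\Phi_t$ and $-\Im\phi_t$ are transported by the same flow, I would derive the key quadratic estimate (\ref{ev.0.19}): the exponent $\Phi_t(x)+\Phi_0(\overline\theta)-(-\Im\phi_t(x,\theta))$ is $\asymp |x-x_t(\overline\theta)|^2$, which for small $t$ follows by continuity from the $t=0$ case (\ref{ev.0.4}) since the relevant Hessian in $x-x_t$ stays positive definite.

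Next I would determine the amplitude $a_t$. Plugging the ansatz (\ref{ev.0.6}) into $(h\partial_t+\widetilde P)\widetilde U(t)=0$ and using the pseudodifferential representation (\ref{ev.0.3}) of $\widetilde P$, stationary phase in the composition produces, at successive powers of $h$, a first-order linear transport equation for $a_{0,t}$ along the (real, lifted) Hamilton flow with a subprincipal-type damping term, and then inhomogeneous transport equations for $a_{1,t},a_{2,t},\dots$ with already-known right-hand sides. These are solved by integration along the flow with initial data $a_{j,t=0}=a_j$ from (\ref{ev.0.3}); one then takes an asymptotic sum $a_t\sim\sum h^j a_{j,t}$, all terms bounded with all derivatives uniformly for $0\le t\le h^\delta$ (the flow time being $\le h^\delta$, no losses accumulate). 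With this choice, $(h\partial_t+\widetilde P)\widetilde U(t)$ has a kernel that is $\mathcal O(h^\infty)$ relative to the Gaussian $e^{(2\Re\Psi_t(x,\overline y)-\Phi_t(x)-\Phi_0(y))/h}$, and since we only solved the eiconal equation to infinite order at the antidiagonal, the resulting $\overline\partial$-type and Taylor-remainder errors are likewise negligible; hence (\ref{ev.0.5}) holds modulo negligible terms.

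For the mapping property I would invoke the general principle already stated after (\ref{ev.0.4}): an operator with kernel of the stated FBI form is $\mathcal O(1):H_{\Phi_0}(V)\to H_{\Phi_t}(W)$ as soon as the modulus of its effective kernel $h^{-n}e^{(2\Re\Psi_t(x,\overline y)-\Phi_t(x)-\Phi_0(y))/h}|a_t|$ is dominated by (a constant times) a semiclassical Gaussian convolution kernel $h^{-n}e^{-|x-x_t(y)|^2/(Ch)}$ in the $y$-variable, after which Schur's lemma (or Young's inequality) gives the bound; the boundedness of $a_t$ and the positivity in (\ref{ev.0.20}) are exactly what is needed, and (\ref{ev.0.20}) is just (\ref{ev.0.19}) re-read with $\theta=\overline y$. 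One should also check that the statement is independent, modulo negligible operators, of the choice of good contour and of the almost holomorphic extension of $\widetilde p$, which follows from Stokes' formula as in the discussion preceding the proposition. I would also note that $W\Subset V$ must be taken small enough that the flow $\exp(tH^{-\Im\sigma}_{\Re\widetilde p})$ is defined on the relevant neighborhood for all $t\in[0,h^\delta]$ and keeps the projection $\Lambda_{\Phi_t}\ni(x,\xi)\mapsto x$ a diffeomorphism; this is where the smallness of $t$ (equivalently of $h^\delta$) enters.

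The main obstacle I expect is bookkeeping of the negligible errors: one must verify that (i) solving the eiconal equation only to infinite order at the antidiagonal, (ii) the $\overline\partial\widetilde p=\mathcal O(\mathrm{dist}(\cdot,\Lambda_{\Phi_0})^\infty)$ discrepancy, and (iii) the asymptotic (rather than exact) summation of the $a_{j,t}$ together contribute only kernels that are $\mathcal O(h^\infty)$ against the reference Gaussian \emph{uniformly} for $0\le t\le h^\delta$ — the uniformity is the delicate point, since a priori estimates on the flow and its derivatives could degrade as $t$ grows, and one has to use that $t\le h^\delta$ with $\delta$ fixed to absorb any such growth into the $h^\infty$ gain. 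Everything else (the existence and regularity of $\Phi_t$, the quadratic lower bound (\ref{ev.0.19}), the $H_\Phi$-boundedness criterion) is either established earlier in the paper or a routine consequence of continuity from $t=0$.
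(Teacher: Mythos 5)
Your proposal is correct and follows essentially the same route as the paper: the eiconal equation (\ref{ev.0.8}) solved to infinite order at the antidiagonal, the identification of $\Phi_t$ through the real Hamilton--Jacobi equation (\ref{ev.0.17}) leading to the quadratic estimate (\ref{ev.0.19})/(\ref{ev.0.20}), transport equations for $a_t$, and boundedness via domination of the effective kernel by a Gaussian convolution kernel as noted after (\ref{ev.0.4}). The paper's ``proof'' is precisely the construction preceding the proposition, so nothing further is needed.
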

Using standard arguments, we also obtain up to negligible errors
\ekv{ev.0.21}
{
h\partial _t\widetilde{U}(t)+\widetilde{U}(t)\widetilde{P}=0,\ 0\le
t\le h^\delta .
}

Let us quickly outline an alternative approach leading to the same weights $\Phi _t$ (cf \cite{MeSj02}):

\par Consider formally:
$$
(e^{-t\widetilde{P}/h}u|e^{-t\widetilde{P}/h}u)_{H_{\Phi _t}}=(u_t|u_t)_{H_{\Phi _t}},\ u\in H_{\Phi _0},
$$
and try to choose $\Phi _t$ so that the time derivative of this expression
vanishes to leading order. We get
\begin{eqnarray*}0&\approx&h\partial _t\int u_t\overline{u}_te^{-2\Phi
    _t/h}L(dx)\\
&=&-\left( (\widetilde{P}u_t|u_t)_{H_{\Phi _t}}+
(u_t|\widetilde{P}u_t)_{H_{\Phi _t}}+\int 2\frac{\partial \Phi
  _t}{\partial t}(x)|u|^2 e^{-2\Phi _t/h}L(dx)
\right).
\end{eqnarray*}
Here 
$$
(\widetilde{P}u_t|u_t)_{H_{\Phi _t}}=\int
(\widetilde{p}_{\vert_
{\Lambda _{\Phi _t}}}+{\cal
  O}(h))|u_t|^2e^{-2\Phi _t/h}L(dx),
$$
and similarly for $(u_t|\widetilde{P}u_t)_{H_{\Phi _t}}$, so we would
like to have
$$
0\approx
\int (2\frac{\partial \Phi _t}{\partial t}+2\Re 
\widetilde{p}_{\vert_
{\Lambda _{\Phi _t}}}+{\cal O}(h))|u_t|^2
e^{-2\Phi _t/h}L(dx).
$$

\par We choose $\Phi _t$ to be the solution of (\ref{ev.0.17}). 
 Then the preceding
discussion again shows that $e^{-t\widetilde{P}/h}={\cal O}(1): H_{\Phi
  _0}\to H_{\Phi _t}$. 

\par
\medskip\par
Since $\Re
\widetilde{p}$ is constant along the integral curves of $H_{\Re
  \widetilde{p}}^{-\Im \sigma }$, we see from (\ref{ev.0.17}), that the
second term in (\ref{ev.0.17}) is $\ge 0$, so 
\ekv{ev.2.5}{\Phi _t\le \Phi _0,\ t\ge 0,}
when
\ekv{ev.0}{{\Re\widetilde{p}_\vert}_{\Lambda _{\Phi _0}}\ge 0.}
Recall that we limit our discussion to the interval $0\le t\le
h^\delta $.

The author found it simpler to get a detailed understanding by working
with the corresponding functions $G_t$ in the following way:

Let $p$ be defined by $p=\widetilde{p}\circ \kappa _T$ and define
$G_t$ up to a $t$-dependent constant by 
$$
\Lambda _{\Phi _t}=\kappa _T(\Lambda _{G_t}).
$$
Then we also have $\Lambda _{G_t}=\exp tH_{p}(\Lambda
_0)$, where $\Lambda _0={\bf R}^{2n}$. In order to fix the
$t$-dependent constant we use one of the equivalent formulae (cf
(\ref{ir.2}), (\ref{ir.6})):
\ekv{ev.3}
{
\Phi _t(x)=\mathrm{v.c.}_{\widetilde{y},\eta }(-\Im \phi
(x,\widetilde{y})-\eta \cdot \Im \widetilde{y}+G_t(\Re
\widetilde{y},\eta )),
} 
\ekv{ev.4}
{
G_t(y,\eta )=\mathrm{v.c.}_{x,\theta }(\Im \phi (x,y+i\theta ) +\eta
\cdot \theta +\Phi _t(x)).
}
If $(x(t,y,\eta ),\theta (t,y,\eta )) $ is the critical point in the
last formula, we get 
\ekv{ev.5}
{
\frac{\partial G_t}{\partial t}(y,\eta )=\frac{\partial \Phi
  _t}{\partial t}(x(t,y,\eta ))=-{{\Re \widetilde{p}(x,\frac{2}{i}\frac{\partial
  \Phi _t}{\partial x})}_\vert}_{x=x(t,y,\eta )}.
}
As we have seen, the critical points in (\ref{ev.3}), (\ref{ev.4}) are
directly related to $\kappa _T$, so (\ref{ev.5}) leads to 
\ekv{ev.6}{\frac{\partial G_t}{\partial t}(y,\eta )+\Re p((y,\eta
)+iH_{G_t}(y,\eta ))=0.}
Notice that $G_t\le 0$ by (\ref{ir.7}), (\ref{ev.2.5}).

\par Since we consider (\ref{ev.6}) only when $G_t$ and its gradient
are small, we can Taylor expand (\ref{ev.6}) and get
\ekv{ev.7}{
\frac{\partial G_t}{\partial t}(y,\eta )+\Re p(y,\eta )+\Re
(iH_{G_t}p(y,\eta ))+{\cal O}((\nabla G_t)^2)=0,
}
which simplifies to 
\ekv{ev.8}{
\frac{\partial G_t}{\partial t}(y,\eta )+H_{\Im p}G_t+
{\cal O}((\nabla G_t)^2)=-\Re p(y,\eta ).
}
Now, $G_t\le 0$, so $(\nabla G_t)^2={\cal O}(G_t)$ and we obtain
\ekv{ev.9}
{
(\frac{\partial }{\partial t}+H_{\Im p})G_t+{\cal O}(G_t)=-\Re p,\ G_0=0.
}
Viewing this as a differential inequality along the integral curves of
$H_{\Im p}$, we obtain
\ekv{ev.10}
{
-G_t(\exp (tH_{\Im p})(\rho ))\asymp \int_0^t \Re p (\exp sH_{\Im
  p}(\rho ))ds,
}
for all $\rho =(y,\eta )\in \mathrm{neigh\,}(\rho _0,{\bf R}^{2n})$,
$\rho _0=(y_0,\eta _0)$.

Now, introduce the following assumption corresponding to the case (B) in Theorem \ref{in2},
\ekv{ev.11}{
H_{\Im p}^j (\Re p)(\rho _0)\cases{=0,\ j\le k-1\cr >0,\ j=k},
}
where $k$ necessarily is even (since $\Re p\ge 0$). We will work in a
sufficiently small neighborhood of $\rho _0$. Put
\ekv{ev.12}{J(t,\rho )=\int_0^t \Re p(\exp sH_{\Im p}(\rho ))ds,}
so that $0\le J(t,\rho )\in C^\infty (\mathrm{neigh\,}(0,\rho
_0),[0,+\infty [\times {\bf R}^{2n})$, and 
\ekv{ev.13}
{
\partial _t^{j+1} J(0,\rho _0)=
H_{\Im p}^j (\Re p)(\rho _0)\cases{=0,\ j\le k-1\cr >0,\ j=k}.
}
\begin{prop}\label{ev1}
Under the above assumptions, there is a constant $C>0$ such that 
\ekv{ev.14}
{
J(t,\rho )\ge \frac{t^{k+1}}{C},\ (t,\rho )\in
\mathrm{neigh\,}((0,\rho _0),]0,+\infty [\times {\bf R}^{2n}).
}
\end{prop}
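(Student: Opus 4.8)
The plan is to prove this as a statement about the smooth non-negative function $g(s,\rho):=\Re p(\exp sH_{\Im p}(\rho))$, in terms of which $J(t,\rho)=\int_0^t g(s,\rho)\,ds$ and $\partial_s^j g(0,\rho)=H_{\Im p}^j\Re p(\rho)$. First I would Taylor expand $g$ in $s$ at $s=0$,
\[
g(s,\rho)=\sum_{j=0}^{k-1}\frac{s^j}{j!}\,\alpha_j(\rho)+\frac{s^k}{k!}\,\beta(s,\rho),\qquad \alpha_j(\rho)=H_{\Im p}^j\Re p(\rho),
\]
with $\beta$ smooth and $\beta(0,\rho_0)=H_{\Im p}^k\Re p(\rho_0)=:k!\,c_0>0$ by (\ref{ev.11}), while $\alpha_j(\rho_0)=0$ for $j\le k-1$. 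After shrinking the neighbourhood of $\rho_0$ and taking the $s$-range to be some $[0,s_1]$, the integral curves $s\mapsto\exp sH_{\Im p}(\rho)$ involved stay in the region where $\Re p\ge0$, so that $g\ge0$ throughout. Substituting $s=t\sigma$ gives the exact identity $t^{-(k+1)}J(t,\rho)=\int_0^1\psi_{t,\rho}(\sigma)\,d\sigma$ with
\[
\psi_{t,\rho}(\sigma):=\frac{g(t\sigma,\rho)}{t^{k}}=\sum_{j=0}^{k-1}\frac{\sigma^j}{j!}\,\frac{\alpha_j(\rho)}{t^{k-j}}+\frac{\sigma^k}{k!}\,\beta(t\sigma,\rho)\ \ge\ 0,
\]
the non-negativity holding not just for $\sigma\in[0,1]$ but for all $\sigma\in[0,s_1/t]$, an interval which exhausts $[0,\infty[$ as $t\to0^+$; this is the only place where the hypothesis $\Re p\ge0$ really enters. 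So the assertion to prove is a uniform lower bound $\int_0^1\psi_{t,\rho}(\sigma)\,d\sigma\ge\delta_1>0$ for $\rho$ near $\rho_0$ and $t>0$ small.

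The heart of the argument is then a compactness/contradiction step. If this lower bound failed, there would be sequences $\rho_n\to\rho_0$ and $t_n\to0^+$ with $\int_0^1\psi_n\,d\sigma\to0$, where $\psi_n:=\psi_{t_n,\rho_n}\ge0$ on $[0,s_1/t_n]$. Write $\psi_n=\widetilde P_n+\frac{\sigma^k}{k!}r_n$, where $\widetilde P_n(\sigma)=\sum_{j=0}^{k-1}\frac{\sigma^j}{j!}\,\alpha_j(\rho_n)\,t_n^{j-k}+c_0\sigma^k$ is a polynomial of degree $k$ with leading coefficient $c_0$, and $r_n(\sigma)=\beta(t_n\sigma,\rho_n)-k!\,c_0\to0$ uniformly on compact $\sigma$-sets (continuity of $\beta$ at $(0,\rho_0)$). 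From $\psi_n\ge0$ on $[0,1]$ we obtain $\widetilde P_n\ge o(1)$ on $[0,1]$, and $\int_0^1\widetilde P_n\,d\sigma\to0$. Let $K_n$, which is $\ge c_0$, be the largest modulus of a coefficient of $\widetilde P_n$. If $(K_n)$ stays bounded along a subsequence, a further subsequence of $\widetilde P_n$ converges coefficientwise to a polynomial $\widetilde P$ of degree $k$ with $\widetilde P\ge0$ on $[0,1]$ and $\int_0^1\widetilde P=0$; but a polynomial that is $\ge0$ with zero integral over $[0,1]$ vanishes identically there, contradicting $\deg\widetilde P=k$. If instead $K_n\to\infty$, then $K_n^{-1}\widetilde P_n$ has largest coefficient modulus $1$ and leading coefficient $c_0/K_n\to0$, so a coefficientwise limit $\widehat P$ (nonzero, of degree $\le k-1$) again satisfies $\widehat P\ge0$ on $[0,1]$ and $\int_0^1\widehat P=0$, the same contradiction. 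Hence the bound holds, which is (\ref{ev.14}).

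The one genuine obstacle is that the lower-order coefficients $\alpha_j(\rho_n)\,t_n^{j-k}$ of the rescaled integrand are of the indeterminate form $0/0$ as $n\to\infty$, so one cannot simply pass to the limit in $\psi_n$; this is precisely what the sign condition $g\ge0$ on the expanding interval $[0,s_1/t_n]$, combined with renormalising the degree-$k$ Taylor polynomial by its largest coefficient, is there to overcome. Everything else --- the Taylor expansion, smoothness of $\beta$, the smallness of the $\alpha_j$ near $\rho_0$, and the elementary fact that a non-negative polynomial with zero integral over $[0,1]$ is identically zero there --- is routine, and the subellipticity hypothesis (\ref{ev.11}) is used only through $\beta(0,\rho_0)>0$. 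No use is made of the parity of $k$.
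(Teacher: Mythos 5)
Your proof is correct, and it is essentially the paper's own argument: both proceed by contradiction, rescale $s=t\sigma$ onto $[0,1]$, Taylor-expand so as to isolate a polynomial of bounded degree whose top coefficient is pinned down by (\ref{ev.11}), and derive the contradiction from compactness in the finite-dimensional space of such polynomials. The only cosmetic differences are that the paper expands $J$ itself (to order $k+1$, using its monotonicity to get $\sup_{[0,1]}|p_\nu|\to 0$ and then the equivalence of the sup norm with the coefficient norm, forcing $\partial_t^{k+1}J(0,\rho _\nu)/(k+1)!\to 0$), whereas you expand the integrand, work with the integral plus nonnegativity, and close the argument with the dichotomy on coefficient size and the fact that a nonzero polynomial that is $\ge 0$ on $[0,1]$ has positive integral.
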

\begin{proof}
Assume that (\ref{ev.14}) does not hold. Then there is a sequence
$(t_\nu ,\rho _\nu )\in [0,+\infty [\times {\bf R}^{2n}$ converging to
$(0,\rho _0)$ such that 
$$
\frac{J(t_\nu ,\rho _\nu )}{t_\nu ^{k+1}}\to 0,
$$
and since $J(t,\rho )$ is an increasing function of $t$
, we get 
$$
\sup_{0\le t\le t_\nu }\frac{J(t ,\rho _\nu )}{t_\nu ^{k+1}}\to 0.
$$
Introduce the Taylor expansion,
$$
J(t,\rho _\nu )=a_\nu ^{(0)}+a_\nu ^{(1)}t+...+a_\nu
^{(k+1)}t^{k+1}+{\cal O}(t^{k+2}),
$$
and define
$$
u_\nu (s)=\frac{J(t_\nu s,\rho _\nu )}{t_\nu ^{k+1}},\ 0\le s\le 1.
$$
Then, on the one hand,
$$
\sup_{0\le s\le 1}u_\nu (s)\to 0,\ \nu \to \infty ,
$$
and on the other hand,
$$
u_\nu (s)=\underbrace
{
\frac{a_\nu ^{(0)}}{t_\nu ^{k+1}}+\frac{a_\nu ^{(1)}}{t_\nu
  ^{k}}s+...+a_\nu ^{(k+1)}s^{k+1}}_{=:p_\nu (s)}+{\cal O}(t_\nu s^{k+2}),
$$
so
$$
\sup_{0\le s\le 1}p_\nu (s)\to 0,\ \nu \to \infty .
$$
The corresponding coefficients of $p_\nu $ have to tend to $0$, and in 
particular,
$$
a_\nu ^{(k+1)}=\frac{1}{(k+1)!}(\partial _t^{k+1}J(0,\rho _\nu )\to 0
$$
which is in contradiction with (\ref{ev.13}). \end{proof}

\par Combining (\ref{ev.10}) and Proposition \ref{ev1}, we get
\begin{prop}\label{ev2}
Under the assumption (\ref{ev.11}) there exists $C>0$ such that
\ekv{ev.15}
{
G_t(\rho )\le -\frac{t^{k+1}}{C},\ (t,\rho )\in
\mathrm{neigh\,}((0,\rho _0),[0,\infty [\times {\bf R}^{2n}).
}
\end{prop}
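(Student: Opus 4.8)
The plan is to obtain (\ref{ev.15}) by combining the differential inequality (\ref{ev.9})--(\ref{ev.10}) with the lower bound on $J$ in Proposition \ref{ev1}, together with an elementary change of variable along the flow of $H_{\Im p}$.

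First I would make (\ref{ev.10}) precise. Writing $g(t)=G_t(\exp (tH_{\Im p})(\rho ))$, the equation (\ref{ev.9}) becomes $g'(t)+{\cal O}(g(t))=-\Re p(\exp (tH_{\Im p})(\rho ))$, with $g(0)=0$ since $G_0=0$; and since $G_t\le 0$ the error term is ${\cal O}(|g(t)|)$. Multiplying by suitable integrating factors $e^{\pm Ct}$ and using $\Re p\ge 0$, one gets, for $t$ in a fixed small interval $[0,T]$ and $\rho \in \mathrm{neigh\,}(\rho _0,{\bf R}^{2n})$, the genuine two-sided estimate
$$
\frac{1}{2}J(t,\rho )\le -G_t(\exp (tH_{\Im p})(\rho ))\le 2J(t,\rho ),
$$
with $J$ as in (\ref{ev.12}); this is the rigorous content of (\ref{ev.10}).

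Next, fix an arbitrary $\rho '$ in a (possibly smaller) neighborhood of $\rho _0$ and $t\in [0,T]$. Put $\rho =\exp (-tH_{\Im p})(\rho ')$; for $T$ small this again lies in $\mathrm{neigh\,}(\rho _0,{\bf R}^{2n})$, and $\rho '=\exp (tH_{\Im p})(\rho )$. The left-hand inequality above together with Proposition \ref{ev1}, applied at $(t,\rho )$, which is close to $(0,\rho _0)$, gives
$$
-G_t(\rho ')\ge \frac{1}{2}J(t,\rho )\ge \frac{t^{k+1}}{2C},
$$
which is (\ref{ev.15}) with $2C$ in place of $C$. Here $t^{k+1}$ has the correct sign because the hypothesis $\Re p\ge 0$ forces the first non-vanishing derivative in (\ref{ev.11}) to be of even order, so $k$ is even.

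The only delicate point — the nearest thing to an obstacle — is the bookkeeping of domains: one must shrink $T$ and the spatial neighborhood of $\rho _0$ so that the pull-back by $\exp (-tH_{\Im p})$ remains in the region where both (\ref{ev.10}) and Proposition \ref{ev1} are valid, and one must note that the constant hidden in the ${\cal O}(G_t)$ term of (\ref{ev.9}) is uniform there, which it is, since that inequality is only used where $G_t$ and $\nabla G_t$ are small. All the substance lies in (\ref{ev.10}) and in Proposition \ref{ev1}; Proposition \ref{ev2} is their immediate combination.
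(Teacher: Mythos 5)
Your proof is correct and follows the paper's route exactly: the paper simply states that Proposition \ref{ev2} follows by ``combining (\ref{ev.10}) and Proposition \ref{ev1}'', and you have supplied precisely the details that implicit combination requires (the integrating-factor argument making (\ref{ev.10}) a genuine two-sided bound, and the pull-back $\rho =\exp (-tH_{\Im p})(\rho ')$ to pass from points on the flow to arbitrary nearby points). Nothing is missing.
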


We can now return to the evolution equation for $\widetilde{P}$ and
the $t$-dependent weight $\Phi _t$ in (\ref{ev.0.17}). From (\ref{ev.15}),
(\ref{ev.3}), we get
\begin{prop}\label{ev3}
Under the assumption (\ref{ev.11}), we have
\ekv{ev.16}{
\Phi _t(x)\le \Phi _0(x)-\frac{t^{k+1}}{C},\ (t,x)\in \mathrm{neigh\,}
((0,x_0),[0,\infty [\times {\bf R}^{2n}).
}
\end{prop}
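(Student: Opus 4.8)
The plan is to read off (\ref{ev.16}) directly from Proposition \ref{ev2}, using only the monotonicity correspondence (\ref{ir.7}) between the weights $\Phi$ and the generating functions $G$, together with one trivial observation: adding a constant $c\in{\bf R}$ to $G$ changes $\Phi_G$ into $\Phi_G+c$. This is immediate from (\ref{ir.2}), since $G(\Re\widetilde y,\eta)$ enters additively in the function whose critical value defines $\Phi_G$ and an additive constant does not move the critical point. In particular, for a fixed small $t\ge0$, writing $c_t:=-t^{k+1}/C$ (with $C$ the constant from Proposition \ref{ev2}), the constant function $G\equiv c_t$ corresponds under (\ref{ir.2}), (\ref{ir.6}) to the weight $\Phi_0+c_t=\Phi_0-t^{k+1}/C$, exactly as $G_t$ corresponds to $\Phi_t$ by the defining relations (\ref{ev.3}), (\ref{ev.4}); and since $t\le h^\delta$ is small, $G_t$ and $c_t$ are small and $\Phi_t$ and $\Phi_0+c_t$ are $C^\infty$-close to $\Phi_0$, so we stay in the regime in which (\ref{ir.7}) was established (using also, via Proposition \ref{ir4}, that $\Phi_0+c_t$ does indeed reproduce the generating function $c_t$ through (\ref{ir.6})).

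With this setup the proof is immediate. By Proposition \ref{ev2}, on a neighbourhood of $(0,\rho_0)$ in $[0,\infty[\times{\bf R}^{2n}$ we have, for each such $t$, the pointwise inequality $G_t(\rho)\le c_t$ in $\rho$. Applying the implication ``$G\le\widetilde G\Rightarrow\Phi\le\widetilde\Phi$'' of (\ref{ir.7}) with $\widetilde G\equiv c_t$ and $\widetilde\Phi=\Phi_0+c_t$ yields $\Phi_t\le\Phi_0+c_t=\Phi_0-t^{k+1}/C$, which is exactly (\ref{ev.16}). If one prefers to unwind (\ref{ir.7}) in the present case, one repeats its proof verbatim with the linear homotopy $s\mapsto G^{(s)}:=(1-s)G_t+sc_t$, $s\in[0,1]$, in place of the homotopy used after (\ref{ir.7}): then $\partial_sG^{(s)}=c_t-G_t\ge0$, and differentiating the critical value in (\ref{ir.2}) and invoking the envelope theorem at the (nondegenerate) critical point shows that the associated weights $\Phi^{(s)}$ satisfy $\partial_s\Phi^{(s)}\ge0$, whence $\Phi_t=\Phi^{(0)}\le\Phi^{(1)}=\Phi_0+c_t$.

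I do not expect any genuine obstacle here: all the real work — the Hamilton--Jacobi description of $G_t$, the differential inequality (\ref{ev.9})--(\ref{ev.10}), the lower bound on $J$ in Proposition \ref{ev1}, and the dictionary (\ref{ir.2})--(\ref{ir.7}) relating $G$ and $\Phi$ — has already been carried out in Section \ref{ge} and earlier in the present section, and this final proposition is a pure bookkeeping consequence. The only thing that requires a little care is the bookkeeping of neighbourhoods: one should fix once and for all a small neighbourhood of $\rho_0=(y_0,\eta_0)$ on which the critical points in (\ref{ev.3}), (\ref{ev.4}) exist and are nondegenerate and which maps under $\kappa_T$ onto a neighbourhood of $x_0$ on which the FBI picture of the present section is valid, and then allow oneself finitely many further shrinkings so that Proposition \ref{ev2} and the smallness hypotheses behind (\ref{ir.7}) both apply on it.
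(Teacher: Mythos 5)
Your argument is correct and is essentially the paper's own: the paper deduces (\ref{ev.16}) in one line from Proposition \ref{ev2} together with the critical-value formula (\ref{ev.3}), the monotonicity (\ref{ir.7}), and the (implicit) observation that adding a constant to $G$ adds the same constant to $\Phi_G$ --- exactly the ingredients you spell out. Your explicit homotopy $G^{(s)}=(1-s)G_t+sc_t$ is just the $G$-side version of the interpolation used in the paper's proof of (\ref{ir.7}), so there is nothing genuinely different here.
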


\section{The resolvent estimates}\label{re}
\setcounter{equation}{0}

Let $P$ be an $h$-pseudodifferential operator satisfying the general assumptions of the introduction.

Let $z_0\in (\partial \Sigma (p))\setminus \Sigma _\infty (p)$. We
first treat the case of Theorem \ref{in2} so that,
\ekv{re.1}
{
z_0\in i{\bf R},
}
\ekv{re.2}
{
\Re p(\rho )\ge 0 \hbox{ in }\mathrm{neigh\,}(p^{-1}(z_0),T^*X),
}
\ekv{re.3}
{
\forall \rho \in p^{-1}(z_0),\ \exists j\le k,\hbox{ such that }H_{\Im
p}^j\Re p(\rho )>0.
}
\begin{prop}\label{re1} $\exists C_0>0$ such that $\forall C_1>0$, $\exists C_2>0$ such that we have for $z,h$ as in (\ref{in.13.5}), $h<1/C_2$, $u\in C_0^\infty (X)$:
\eekv{re.4}
{&&
|\Re z|\Vert u\Vert\le C_0 \Vert (z-P)u\Vert,\hbox{ when }\Re z\le -h^{\frac{k}{k+1}},
}
{&&
h^{\frac{k}{k+1}}\Vert u\Vert\le C_0 \exp (\frac{C_0}{h}(\Re z)_+^{\frac{k+1}{k}}) \Vert (z-P)u\Vert,\hbox{ when }\Re z\ge -h^{\frac{k}{k+1}}.
}

\end{prop}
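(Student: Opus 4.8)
The plan is to combine the semi-group bounds obtained in Sections \ref{ge} and \ref{ev} with the Laplace-transform formula \nr{in.16}, after microlocalizing near $p^{-1}(z_0)$ and reducing to the conjugated operator $\widetilde P$ on the FBI side. First I would replace $z$ by $z-z_0$ (so $z_0=0$ after the shift, using \nr{re.1}) and split the analysis into the region away from $p^{-1}(0)$, where ellipticity of $p-z$ (from $z_0\in\partial\Sigma(p)$ and $z$ small) gives an easy a-priori bound $\|u\|\lesssim\|(z-P)u\|$ modulo negligible errors, and a finite number of small microlocal neighborhoods of points $\rho_0\in p^{-1}(0)$, glued by a partition of unity. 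Near each $\rho_0$ we factorize $P-z$ and conjugate by the FBI transform $T$ of Section \ref{ge}; by Egorov's theorem $\widetilde P=TPT^{-1}$ has leading symbol $\widetilde p=p\circ\kappa_T^{-1}$, so \nr{re.2}, \nr{re.3} become exactly the hypotheses \nr{ev.0}, \nr{ev.11} for $\widetilde p$ on $\Lambda_{\Phi_0}$, and Proposition \ref{ev3} applies: $\Phi_t\le\Phi_0-t^{k+1}/C$ on $0\le t\le h^\delta$.

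Next I would feed this into the semi-group estimate. Proposition \ref{ev01} (together with \nr{ev.0.21}) says the approximate propagator $\widetilde U(t)=e^{-t\widetilde P/h}$ maps $H_{\Phi_0}(V)\to H_{\Phi_t}(W)$ with norm $\mathcal O(1)$; combined with $\Phi_t\le\Phi_0-t^{k+1}/C$ this yields
\ekv{re.plan1}
{
\| e^{-t\widetilde P/h}\|_{H_{\Phi_0}\to H_{\Phi_0}}\le C\exp\Bigl(-\frac{t^{k+1}}{Ch}\Bigr),\quad 0\le t\le h^\delta,
}
because passing from $H_{\Phi_t}$ back to $H_{\Phi_0}$ costs a factor $\exp(\sup(\Phi_t-\Phi_0)/h)=\exp(-t^{k+1}/(Ch))$ in the weighted $L^2$ norm. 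Choosing $\delta$ with $\delta(k+1)<1$, the right-hand side is $\mathcal O(h^\infty)$ once $t\ge h^{\delta'}$ for any $\delta'<1/(k+1)$; in particular it is $\mathcal O(h^\infty)$ at $t=h^\delta$. Then for $\Re z<C_1(h\ln\tfrac1h)^{k/(k+1)}$ I write, microlocally and modulo negligible terms,
\ekv{re.plan2}
{
(z-\widetilde P)^{-1}=-\frac1h\int_0^{h^\delta} e^{t(z-\widetilde P)/h}\,dt+\mathcal O(h^\infty),
}
the tail $\int_{h^\delta}^\infty$ being absorbed into the $\mathcal O(h^\infty)$ because the semi-group is already that small there and $e^{t\Re z/h}$ grows only sub-exponentially on the relevant range of $z$ (this is where the logarithm enters: $t\le h^\delta$ and $\Re z\le C_1(h\ln\tfrac1h)^{k/(k+1)}$ keep $t\Re z/h$ bounded by $C_1'(\ln\tfrac1h)$, i.e. a fixed negative power of $h$). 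Estimating \nr{re.plan2} with \nr{re.plan1} and $|e^{t(z-\widetilde P)/h}|\le e^{t\Re z/h}\|e^{-t\widetilde P/h}\|$ gives, after the standard Laplace-type computation $\int_0^\infty\exp(\tfrac{t\Re z}{h}-\tfrac{t^{k+1}}{Ch})\,dt$, the bound $\mathcal O(h^{-k/(k+1)})\exp(C_0(\Re z)_+^{(k+1)/k}/h)$ when $\Re z\ge -h^{k/(k+1)}$, and $\mathcal O(1/|\Re z|)$ when $\Re z\le -h^{k/(k+1)}$ (in the latter regime the exponential weight is $\le 1$ and the integral is dominated by $h/|\Re z|$). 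Translating back through $T$ and $T^{-1}$, which are microlocally bounded with bounded inverse, and summing the partition of unity over the compact set $p^{-1}(z_0)$ together with the elliptic region, gives exactly \nr{re.4}.

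The main obstacle is not any single estimate but the bookkeeping of the microlocal reduction: one must check that the factorization of $P-z$ near $\rho_0$ and the conjugation by $T$ produce an operator $\widetilde P$ whose symbol genuinely satisfies \nr{ev.0} and \nr{ev.11} uniformly for $z$ in the stated $z$-range (the $z$-dependence must be absorbed into a harmless perturbation since $|z|$ is $o(1)$), and that the error terms — $\overline\partial\widetilde P$, the negligible parts of $\widetilde U(t)$, the choice of good contour, the gluing — are all genuinely $\mathcal O(h^\infty)$ in the weighted $H_\Phi$ norms for $\Phi$ within $\mathcal O(h\ln\tfrac1h)$ of $\Phi_0$, so that they do not spoil the $\exp(C_0(\Re z)^{(k+1)/k}/h)$ growth. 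A second delicate point is the interplay between the cut-off time $t=h^\delta$ and the admissible range of $\Re z$: one needs $t\,\Re z/h$ to stay $\le C\ln\tfrac1h$ on $0\le t\le h^\delta$, which forces $\Re z\le C_1(h\ln\tfrac1h)^{k/(k+1)}$ to match the scale $h^\delta$ with $\delta$ near $1/(k+1)$ — this is precisely the mechanism that, relative to \cite{DeSjZw}, upgrades $h^{k/(k+1)}$ to $(h\ln\tfrac1h)^{k/(k+1)}$, and getting the constants to line up is the heart of the argument.
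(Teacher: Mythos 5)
Your proposal is correct and follows essentially the same route as the paper: FBI conjugation, the weight estimate $\Phi_t\le\Phi_0-t^{k+1}/C$ from Proposition \ref{ev3}, the truncated Laplace transform $\widetilde R(z)=\frac1h\int_0^{h^\delta}e^{tz/h}\widetilde U(t)\,dt$ with $\delta(k+1)<1$, and the evaluation of $\int_0^\infty e^{(t\Re z-t^{k+1}/C)/h}\,dt$ (Lemma \ref{re2}), with exactly the mechanism you identify ($t\Re z/h\le C\ln\frac1h$ on the admissible $z$-range) producing the logarithmic improvement. Two minor remarks: the factorization of $P-z$ near $\rho_0$ is not needed here (it belongs to the proof of Theorem \ref{in1}; under (\ref{re.1})--(\ref{re.3}) one already has $\Re p\ge 0$ near $p^{-1}(z_0)$), and the paper disposes of the regime $\Re z\le -h^{k/(k+1)}$ by the semiclassical sharp G\aa{}rding inequality before passing to the FBI side, whereas your variant via the Laplace integral also works (cf.\ (\ref{re.16})).
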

\begin{proof}
The required estimate is easy to obtain microlocally in the region
where $P-z_0$ is elliptic, so we see that it suffices to show the
following statement:

\par For every $\rho _0\in p^{-1}(z_0)$, there exists $\chi \in
C_0^\infty (T^*X)$, equal to 1 near $\rho _0$, such that for $z,h$ as in
(\ref{in.13.5}) and letting $\chi $ also denote a corresponding $h$-pseudodifferential operator, we have
\eekv{re.5}
{&&
|\Re z|\Vert \chi u\Vert\le C_0 \Vert (z-P)u\Vert +C_Nh^N\Vert u\Vert,\hbox{ when }\Re z\le -h^{\frac{k}{k+1}},
}
{&& \hskip -1cm
h^{\frac{k}{k+1}}\Vert \chi u\Vert\le C_0 \exp (\frac{C_0}{h}(\Re z)_+^{\frac{k+1}{k}}) \Vert (z-P)u\Vert +C_N h^N\Vert u\Vert,\hbox{ when }\Re z\ge -h^{\frac{k}{k+1}},
}
where $N\in {\bf N}$ can be chosen arbitrarily.

\par When $\Re z\le -h^{k/(k+1)}$ this is an easy consequence of the semi-classical sharp G\aa{}rding inequality (see for instance \cite{DiSj99}), so from now on we assume that $\Re z\ge -h^{k/(k+1)}$
.

\par If $T$ is an FBI transform and $\widetilde{P}$ denotes the
conjugated operator $TPT^{-1}$, it suffices to show that
\ekv{re.6}
{
\Vert u\Vert_{H_{\Phi _0}(V_1)}\le 
h^{-\frac{k}{k+1}}C_0 \exp (\frac{C_0}{h}(\Re z)_+^{\frac{k+1}{k}})
\Vert
(\widetilde{P}-z)u\Vert_{H_{\Phi _0}(V_2)}+{\cal O}(h^\infty )\Vert
u\Vert_{H_{\Phi _0}(V_3)}, 
}
$u\in H_{\Phi _0}(V_3)$, 
where $V_1\Subset V_2\Subset V_3$ are neighborhoods of $x_0$, given by 
$(x_0,\xi _0)=\kappa _T(\rho _0)\in \Lambda _{\Phi _0}$.

From Proposition \ref{ev3} and the fact that $\widetilde{U}(t):H_{\Phi
_0}(V_2)\to H_{\Phi _t}(V_1)$, we see that 
\ekv{re.7}
{
\Vert \widetilde{U}(t)u\Vert_{H_{\Phi _0}(V_1)}\le Ce^{-t^{k+1}/C}
\Vert u\Vert_{H_{\Phi _0}(V_2)}.
}
Choose $\delta >0$ small enough so that $\delta (k+1)<1$ and put 
\ekv{re.8}
{
\widetilde{R}(z)=\frac{1}{h}\int_0^{h^\delta }
e^{\frac{tz}{h}}\widetilde{U}(t) dt.
}

\par We shall verify that $\widetilde{R}$ is an approximate left
inverse to $\widetilde{P}-z$, but first we study the norm of
this operator in $H_{\Phi _0}$, starting with the estimate in ${\cal L}(H_{H_{\Phi _0}(V_2)},H_{H_{\Phi _0}(V_1)})$:
\ekv{re.9}
{
\Vert e^{\frac{tz}{h}}\widetilde{U}(t)\Vert \le C \exp \frac{1}{h}(t\Re
z-\frac{t^{k+1}}{C})
}
and notice that the right hand side is ${\cal O}(h^\infty )$
for $t=h^\delta $, since $\delta (k+1)<1$ and $h^{-1}\Re z\le {\cal O}(1)\ln \frac{1}{h}$.

\par We get 
\ekv{re.10}{\Vert \widetilde{R}(z)\Vert\le
 \frac{C}{h}\int_0^{+\infty
} \exp \frac{1}{h}(t\Re
z-\frac{t^{k+1}}{C})dt=\frac{C^{\frac{k+2}{k+1}}}{h^{\frac{k}{k+1}}}I(\frac{C^{\frac{1}{k+1}}}{h^{\frac{k}{k+1}}}\Re z),}
where
\ekv{re.11}
{
I(s)=\int_0^\infty e^{st-t^{k+1}}dt.
}
\begin{lemma}\label{re2}
We have
\ekv{re.12}{I(s)={\cal O}(1),\hbox{ when }|s|\le 1,}
\ekv{re.13}{I(s)=\frac{{\cal O}(1)}{|s|},\hbox{ when }s\le -1,}
\ekv{re.14}{I(s)\le {\cal O}(1)s^{-\frac{k-1}{2k}}
\exp \left(
  \frac{k}{(k+1)^{\frac{k+1}{k}}}s^{\frac{k+1}{k}}\right),\hbox{ when
}s\ge 1.
}
\end{lemma}
\begin{proof}
The first two estimates are straight forward and we concentrate on the
last one, where we may also assume that $s\gg 1$. A 
computation shows that the exponent $f_s(t)=st-t^{k+1}$ on $[0,+\infty
[$ has a unique critical point $t=t(s)=(s/(k+1))^{1/k}$ which is a
nondegenerate maximum,
$$
f_s''(t(s))=-k(k+1)^{\frac{1}{k}}s^{\frac{k-1}{k}},
$$
with critical value
$$
f_s(t(s))=\frac{k}{(k+1)^{\frac{k+1}{k}}}s^{\frac{k+1}{k}}.
$$
It follows that the upper bound in (\ref{re.14}) is the one we would
get by applying the formal stationary phase formula. 

\par Now 
$$
f_s''(t)=-(k+1)kt^{k-1}\lesssim f_s''(t(s)),\hbox{ for
}\frac{t(s)}{2}\le t<+\infty ,
$$
so $\int_{t(s)/2}^\infty e^{st-t^{k+1}}dt$ satisfies the required
upper bound.

\par On the other hand we have 
$$
f_s(t(s))-f_s(t)\ge \frac{s^{\frac{k+1}{k}}}{C},\hbox{ for }0\le t\le \frac{t(s)}{2},\ s\gg 1,
$$
so 
$$
\int_0^{\frac{t(s)}{2}} e^{st-t^{k+1}}dt\le {\cal O}(1) s^{\frac{1}{k}}\exp (f_s(t(s))-\frac{s^{\frac{k+1}{k}}}{C}),
$$
and (\ref{re.14}) follows.
\end{proof}

\par Applying this to (\ref{re.10}), we get
\begin{prop}\label{re3}
We have 
\ekv{re.15}{\Vert \widetilde{R}(z)\Vert\le
  \frac{C}{h^{\frac{k}{k+1}}},\quad |\Re z|\le {\cal O}(1)h^{\frac{k}{k+1}},}
\ekv{re.16}{\Vert \widetilde{R}(z)\Vert\le \frac{C}{|\Re z|},\quad -1\ll
  \Re z \le -h^{\frac{k}{k+1}},}
\ekv{re.17}{\Vert \widetilde{R}(z)\Vert\le \frac{C}{h^{\frac{k}{k+1}}}
\exp (C_k \frac{(\Re z)^{\frac{k+1}{k}}}{h}),\quad h^{\frac{k}{k+1}}\le \Re
z \ll 1.
}
\end{prop}

\par From the beginning of the proof of Lemma \ref{re2}, or more
directly from (\ref{re.9}), we see that 
$$
\Vert e^{\frac{tz}{h}}\widetilde{U}(t)\Vert \le C\exp
\frac{C_k}{h}(\Re z)_+^{\frac{k+1}{k}},
$$
which is bounded by some negative power of $h$, since we have imposed
the restriction $\Re z\le {\cal O}(1)(h\ln
\frac{1}{h})^{\frac{k}{k+1}}$. Working locally, we then see that
modulo a negligible operator,
$$
\widetilde{R}(z)(\widetilde{P}-z)\equiv \frac{1}{h}\int_0^{h^\delta }
e^{\frac{tz}{h}}(-h\partial _t-z)\widetilde{U}(t)dt\equiv 1,
$$
where the last equivalence follows from an integration by parts and the fact that the integrand is negligible for $t=h^\delta $.
Combining this with Proposition \ref{re3}, we get (\ref{re.6}), and
this completes the proof of Proposition \ref{re1}.
\end{proof}

\par We can now finish the 

\begin{proofof}
Theorem \ref{in2}. Using standard
pseudodifferential machinery (see for instance \cite{DiSj99}) we first
notice that $P$ has discrete spectrum in a neighborhood of $z_0$ and
that $P-z$ is a Fredholm operator of index 0 from ${\cal D}(P)$ to
$L^2$ when $z$ varies in a small neighborhood of $z_0$. On the other
hand, Proposition \ref{re1} implies that $P-z$ is injective and hence
bijective for $\Re z\le {\cal O}(k^{k/(k+1)})$ and we also get the
corresponding bounds on the resolvent.\end{proofof}

\begin{proofof} Theorem \ref{in1}: We may asume for simplicity that
  $z_0=0$ and consider a point $\rho _0\in p^{-1}(0)$. After
  conjugation with a microlocally defined unitary Fourier integral
  operator, we may assume that $\rho _0=(0,0)$ and that $dp(\rho
  _0)=d\xi _n$. Then from Malgrange's preparation theorem we get near
  $\rho =(0,0)$, $z=0$
\ekv{re.18}
{
p(\rho) -z=q(x,\xi ,z)(\xi _n+r(x,\xi ',z)),\ \xi '=(\xi _1,...,\xi _{n-1}),
}
where $q$, $r$ are smooth and $q(0,0,0)\ne 0$, and as in
\cite{DeSjZw}, we notice that either $\Im r(x,\xi ',0)\ge 0$ in a
neighborhood of $(0,0)$ or $\Im r(x,\xi ',0)\le 0$ in a
neighborhood of $(0,0)$. Indeed, otherwise there would exist sequences
$\rho _j^+$, $\rho _j^-$ in ${\bf R}^n\times {\bf R}^{n-1}$,
converging to $(0,0)$ such that $\pm \Im r(\rho _j^{\pm})>0$. It is
then easy to construct a simple closed curve $\gamma _j$ in a small
neighborhood of $\rho _0$, passing through the points $(\rho
_j^{\pm},0)$, such that the image of $\gamma _j$ under the map $(x,\xi
)\mapsto \xi _n+r(x,\xi ',0)$ is a simple closed curve in ${\bf
  C}\setminus 
\{ 0\}$, with winding number $\ne 0$. Then the same holds for the
image of $\gamma _j$ under $p$, and we see that ${\cal R}(p)$ contains
a full neighborhood of $0$, in contradiction with the assumption that
$0=z_0\in \partial \Sigma (p)$. 

In order to fix the ideas, let us assume that $\Im r\le 0$ near $\rho
_0$ when $z=0$, so that $\Re (i(\xi _n+r(x,\xi ',0)))\ge 0$. From
(\ref{re.18}), we get the pseudodifferential factorization
\ekv{re.19}
{
P(x,hD_x;h)-z=\frac{1}{i}Q(x,hD_x,z;h)\widehat{P}(x,hD_x,z;h),
}
microlocally near $\rho _0$ when $z$ is close to $0$. Here $Q$ and
$\widehat{P}$ have the leading symbols $q(x,\xi ,z)$ and $i(\xi
_n+r(x,\xi ',z))$ respectively.

\par We can now obtain a microlocal apriori estimate for $\widehat{P}$
as before. Let us first check that the assumption in (B) of Theorem \ref{in1} amounts to the statement that for $z=z_0=0$:
\ekv{re.20}
{
H^j_{\Re \widehat{p}}\Im \widehat{p}(\rho _0)>0
}
for some $j\in\{ 1,2,...,k\}$. In fact, the assumption in Theorem \ref{in1} (B) is obviously invariant under multiplication of $p$ by non-vanishing smooth factors, so we drop the hats and assume from the start that $p=\widehat{p}$ and $\Im p\ge 0$. Put $\rho (t)=\exp tH_p(\rho _0)$, $r(t)=\exp tH_{\Re p}(\rho _0)$ and let $j\ge 0$ be the order of vanishing of $\Im p(r(t)$ at $t=0$. From $\dot{\rho }(t)=H_p(\rho (t))$, $\dot{r}(t)=H_{\Re p}(r(t))$, we get 
$$
\frac{d}{dt}(\rho -r)=iH_{\Im p}(r)+{\cal O}(\rho -r),
$$
so 
$$
\rho (t)-r(t)=\int_0^t {\cal O}(\nabla \Im p(r(s))) ds.
$$
Here, if $p_2=\frac{1}{2i}(p-p^*)$ is the almost holomorphic extension of $\Im p$, we get 
\begin{eqnarray*}
&p^*(\rho (t))=ip_2(\rho (t))=&\\
&ip_2(r(t))+i\nabla p_2(r(t))\cdot (\rho (t)-r(t))+{\cal O}((\rho 
(t)-r(t))^2)=&\\
&ip_2(r(t))+i\nabla p_2(r(t))\cdot \int_0^t {\cal O}(\nabla p_2(r(s)))ds
+{\cal O}(1)(\int _0^t {\cal O}(\nabla p_2(r(s)))ds)^2.&
\end{eqnarray*} 
Here, $\nabla p_2(r(t))={\cal O}(p_2(r(t))^{1/2})={\cal O}(t^{j/2})$, so 
$p^*(\rho (t))=ia(\rho _0)t^j+{\cal O}(t^{j+1})$.

\par Then, if we conjugate with an FBI-Bargmann transform as above, we
can construct an approximation $\widetilde{U}(t)$ of $\exp
(-t\widetilde{\widehat{P}}/h)$, such that 
$$
\| \widetilde{U}(t)\| \le C_0 e^{(C_0t|z-z_0|-t^{k+1}/C_0)/h},
$$
when $|z-z_0|={\cal O}((h\ln \frac{1}{h})^{k/(k+1)})$.

\par From this we obtain a microlocal apriori estimate for
$\widehat{P}$ analogous to the one for $P-z$ in Proposition \ref{re1},
and the proof can be completed in the same way as for Theorem \ref{in2}.
\end{proofof}

\section{Examples}\label{ex}
\setcounter{equation}{0}
Consider 
\ekv{ex.1}
{
P=-h^2\Delta +iV(x),\ V\in C^\infty (X;{\bf R}),
}
where either $X$ is a smooth compact manifold of dimension $n$ or $X={\bf R}^n$. In the second case we assume that $p=\xi ^2+iV(x)$ belongs to a symbol space $S(m)$ where $m\ge 1$ is an order function. It is easy to give quite general sufficient condition for this to happen, let us just mention that if $V\in C_b^\infty ({\bf R}^2)$ then we can take $m=1+\xi ^2$ and if $\partial ^\alpha V(x)={\cal O}((1+|x|)^2)$ for all $\alpha \in {\bf N}^n$ and satisfies the ellipticity condition $|V(x)|\ge C^{-1}|x|^2$ for $|x|\ge C$, for some constant $C>0$, then we can take $m=1+\xi ^2+x^2$.

We have $\Sigma (p)=[0,\infty [+i\overline{V(X)}$. When $X$ is compact then $\Sigma _\infty (p)$ is empty and when $X={\bf R}^n$, we have $\Sigma _\infty (p)=[0,\infty [+i\Sigma _\infty (V)$, where $\Sigma _\infty (V)$ is the set of accumulation points at infinity of $V$. 

Let $z_0=x_0+iy_0\in \partial \Sigma (p)\setminus \Sigma _\infty (p)$.
\begin{itemize}
\item In the case $x_0=0$ we see that Theorem \ref{in2} (B) is applicable with $k=2$, provided that $y_0$ is not a critical value of $V$.
\item Now assume that $x_0>0$ and that $y_0$ is either the maximum or the minimum of $V$. In both cases, assume that $V^{-1}(y_0)$ is finite and 
that each element of that set is a non-degenerate maximum or minimum. Then Theorem \ref{in2} (B) is applicable to $\pm iP$. By allowing a more complicated behaviour of $V$ near its extreme points, we can produce examples where \ref{in2} (B) applies with $k>2$.
\end{itemize}

\par Now, consider the non-self-adjoint harmonic oscillator 
\ekv{ex.2}{
Q=-\frac{d^2}{dy^2}+iy^2
}
on the real line, studied by Boulton \cite{Bou} and Davies \cite{Da2}. Consider a large spectral parameter $E=i\lambda +\mu $ where $\lambda \gg 1$ and $|\mu |\ll \lambda $. The change of variables $y=\sqrt{\lambda }x$ permits us to identify $Q$ with 
$Q=\lambda P$, where $P=-h^2\frac{d^2}{dx^2}+ix^2$ and $h=1/\lambda \to 0$. 
Hence $Q-E=\lambda (P-(1+i\frac{\mu }{\lambda }))$ and Theorem \ref{in2} (B)
 is applicable with $k=2$. We conclude that $(Q-E)^{-1}$ is well-defined and of polynomial growth in $\lambda $ (which can be specified further) respectively ${\cal O}(\lambda ^{-1})$ when 
$$
\frac{\mu }{\lambda }\le C_1 (\lambda ^{-1}\ln \lambda )^{\frac{2}{3}}\hbox{ and } 
\frac{\mu }{\lambda }\le C_1\hbox{ respectively,}
$$ 
for any fixed $C_1>0$,
i.e. when 
\ekv{ex.3}
{
\mu \le C_1\lambda ^{\frac{1}{3}}(\ln \lambda )^{\frac{2}{3}}\hbox{ and }
\mu \le C_1\lambda ^{\frac{1}{3}} \hbox{ respectively.}
}

We end by making a comment about the Kramers--Fokker--Planck operator
\ekv{ex.4}
{
P=h y\cdot \partial_x-V'(x)\cdot h\partial _y+\frac{1}{2}(y-h\partial _y)\cdot (y+h\partial _y) 
}
on ${\bf R}^{2n}={\bf R}^n_x\times {\bf R}^n_y$, where $V$ is smooth and real-valued. The associated semi-classical symbol is 
$$
p(x,y;\xi ,\eta )=i(y\cdot \xi -V'(x)\cdot \eta )+\frac{1}{2}(y^2+\eta ^2)
$$
on ${\bf R}^{4n}$, and we notice that $\Re p_1\ge 0$. Under the assumption that the Hessian $V''(x)$ is bounded with all its derivatives, $|V'(x)|\ge C^{-1}$ when $|x|\ge C$ for some $C>0$, and that $V$ is a Morse function, F.~H\'erau, C.~Stolk and the author \cite{HeSjSt} showed among other things that the spectrum in any given strip $i[\frac{1}{C_1},C_1]+{\bf R}$ is contained in a half strip 
\ekv{ex.5}
{
i[\frac{1}{C_1},C_1]+[\frac{h^{2/3}}{C_2},\infty [
} 
for some $C_2=C_2(C_1)>0$ and that the resolvent is ${\cal O}(h^{-2/3})$ in the complementary halfstrip. (We refrain from recalling more detailed statements about spectrum and absence of spectrum in the regions where $|\Im z|$ is large and small respectively.)

The proof of this result employed exponentially weighted estimates based on the fact that $H_{p_2}^2p_1>0$ when $p_2\asymp 1$, $p_1\ll 1$. This is of course reminiscent of Theorem \ref{in2} (B) with $k=2$ or rather the corresponding result in \cite{DeSjZw}, but actually more complicated since our operator is not elliptic near $\infty $ and we even have that $i{\bf R}\setminus \{ 0\}$ is not in the range of $p$ but only in $\Sigma _\infty (p)$. It seems likely that the estimates on the spectrum of the KFP-operator above can be improved so that we can replace $h$ by $h\ln (1/h)$ in the confinement (\ref{ev.5}) of the spectrum of $P$ in the strip 
$i[1/C_1,C_1]+{\bf R}$ and that there are similar improvements for large and small values of $|\Im z|$. This would be obtained either by a closer look at the proof in \cite{HeSjSt} or by an adaptation of the proof above when $k=2$.

\end{document}